\newtheorem{theorem}{Theorem}
\newtheorem{lemma}[theorem]{Lemma}
\newtheorem{conjecture}[theorem]{Conjecture}
\newtheorem{proposition}[theorem]{Proposition}
\newtheorem{corollary}[theorem]{Corollary}
\newtheorem*{claim}{Claim}
\theoremstyle{definition}
\theoremstyle{definition}
\theoremstyle{definition}
\tikzstyle{vertex}=[circle, draw, fill=black, inner sep=0pt, minimum size=4pt]
\tikzstyle{edge}=[line width=1.5pt]
\begin{document}

\title{Classifying the globally rigid edge-transitive graphs and distance-regular graphs in the plane}

\author[Sean Dewar]{Sean Dewar*}\thanks{*Johann Radon Institute for Computational and Applied Mathematics, Austrian Academy of Sciences.\\Email: \url{sean.dewar@ricam.oeaw.ac.at}}

\keywords{rigidity, global rigidity, edge-transitive graphs, distance-regular graphs}

\begin{abstract}
	A graph is said to be globally rigid if almost all embeddings of the graph's vertices in the Euclidean plane will define a system of edge-length equations with a unique (up to isometry) solution.
	In 2007,
	Jackson, Servatius and Servatius characterised exactly which vertex-transitive graphs are globally rigid solely by their degree and maximal clique number,
	two easily computable parameters for vertex-transitive graphs.
	In this short note we will extend this characterisation to all graphs that are determined by their automorphism group.
	We do this by characterising exactly which edge-transitive graphs and distance-regular graphs are globally rigid by their minimal and maximal degrees.
\end{abstract}

\maketitle

\section{Introduction}

A \emph{realisation} of a (finite simple) graph $G=(V,E)$ is any map $p :V \rightarrow \mathbb{R}^2$,
which can be considered to be a straight-line embedding of $G$ that allows edge crossings.
Two realisations $p,q$ of the same graph $G=(V,E)$ are said to be \emph{equivalent} if $\|p(x)-p(y)\| = \|q(x)-q(y)\|$ for all edges $xy\in E$,
and they are said to be \emph{congruent} if $\|p(x)-p(y)\| = \|q(x)-q(y)\|$ for all pairs of vertices $x,y\in V$.
With this, we define a realisation $p$ of $G$ to be \emph{globally rigid} if every equivalent realisation of $G$ is congruent with $p$.
It was proven in \cite{Conn05,GortlerThurstonHealey10} that global rigidity is a generic property,
i.e., if a generic realisation $p$ of $G$ (a realisation where the coordinates of $p$ considered as a vector of $\mathbb{R}^{2|V|}$ form an algebraically independent set) is globally rigid,
then all other generic realisations of $G$ are also globally rigid.
Hence we can define a graph to be \emph{globally rigid} if it has a globally rigid generic realisation.
Global rigidity has a variety of applications,
including localisation in sensor networks \cite{sensor} and molecular conformation \cite{moleconf}.

In \cite{JSS07},
Jackson, Servatius and Servatius proved the following sufficiency condition for graphs that are \emph{vertex-transitive}, i.e., their automorphism group acts transitively on their vertex set.

\begin{theorem}[{\cite[Theorem 2.2]{JSS07}}]\label{t:jss07}
	Let $G=(V,E)$ be a connected vertex-transitive graph with degree $k$.
	Then $G$ is globally rigid if and only if either:
	\begin{enumerate}[(i)]
	    \item $k \geq 6$,
	    \item $k=5$, and either the maximal clique size is at most 4 or $|V| \leq 28$,
	    \item $k=4$, and either the maximal clique size is at most 3 or $|V| \leq 11$, or
	    \item $G$ is a complete graph.
	\end{enumerate}
\end{theorem}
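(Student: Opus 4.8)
The plan is to reduce the statement to the combinatorial characterisation of generic global rigidity in the plane due to Jackson and Jord\'an: a graph on at least four vertices is globally rigid if and only if it is $3$-connected and \emph{redundantly rigid}, meaning it remains rigid after the deletion of any single edge. Complete graphs are globally rigid directly (for at most three vertices trivially, and otherwise via this characterisation), so case (iv) is immediate. For every other case I would establish $3$-connectivity and redundant rigidity separately in the ``if'' direction, and exhibit the failure of one of the two properties in the ``only if'' direction.

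Connectivity is the routine ingredient. I would invoke the classical bound of Mader and Watkins, that a connected vertex-transitive graph of degree $k$ has vertex-connectivity at least $\tfrac{2}{3}(k+1)$; for $k \ge 4$ this already forces $3$-connectivity, so the entire content of the theorem lies in redundant rigidity. At the other end, a non-complete $k$-regular graph with $k \le 3$ has only $\tfrac{k}{2}|V| \le \tfrac{3}{2}|V| < 2|V|-2$ edges once $|V| \ge 5$, and so cannot be redundantly rigid; the finitely many smaller graphs are not rigid by inspection. This disposes of all degrees below $4$ and reduces everything to certifying redundant rigidity for $k \in \{4,5\}$ and for $k \ge 6$.

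The heart of the matter is a dichotomy governed by the maximal clique size $c$, and the right lens is the body-bar interpretation of a clique. If $c \ge k$ then $G$ contains a copy of $K_k$, and vertex-transitivity forces (in the non-complete case) every vertex to spend $k-1$ of its $k$ edges inside a unique maximal clique of size exactly $k$; the maximal cliques are therefore vertex-disjoint and $G$ is a union of $m = |V|/k$ rigid bodies joined by the remaining $\tfrac{k}{2}m$ edges. By Tay's theorem such a body-bar framework is rigid in the plane exactly when its bar-multigraph contains three edge-disjoint spanning trees, and comparing $\tfrac{k}{2}m$ against the threshold $3m-3$ shows where the degree enters: the inequality $\tfrac{k}{2}m \ge 3m-3$ holds automatically for $k \ge 6$ (which is precisely why large cliques do no harm above degree $5$), but imposes $|V| \lesssim 30$ when $k=5$ and $|V| \lesssim 12$ when $k=4$. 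When instead $c \le k-1$ no subgraph is over-braced badly enough to waste the available edges, and I would certify redundant rigidity from the high connectivity of $G$ together with the density machinery underlying the Lov\'asz--Yemini theorem that $6$-connected graphs are redundantly rigid, together with its refinements to clique-bounded graphs of lower connectivity; this settles (i) and the clique-bounded alternatives in (ii) and (iii).

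The main obstacle I anticipate is matching the crude counts above to the exact thresholds $28$ and $11$. The naive body-bar inequality only brackets these constants, because it is global rigidity, not mere rigidity, of the clique-structured framework that must be certified; this calls for the finer global-rigidity theory of body-bar and molecular frameworks and a careful treatment of the redundancy lost when a connecting edge is deleted. Concretely I would prove the negative direction for $|V|$ above the threshold by showing the three-tree condition breaks after deleting one connecting bar, and verify global rigidity below the threshold directly -- since for fixed $k$ and bounded $|V|$ there are only finitely many vertex-transitive graphs, this residual family can be settled by explicit criteria or, if need be, by computation. Making these two bounds meet at exactly $28$ and $11$ is the delicate point on which the theorem ultimately turns.
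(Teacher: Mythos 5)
First, a point of reference: this statement is not proved in the paper at all --- it is quoted verbatim from Jackson, Servatius and Servatius \cite{JSS07}, so there is no in-paper proof to compare your attempt against. Judged on its own terms, your plan does follow the broad strategy of the original argument (reduce to $3$-connectivity plus redundant rigidity via \Cref{t:gr}, dispose of connectivity with the Mader--Watkins bound, kill $k\le 3$ by an edge count, and split the cases $k\in\{4,5\}$ according to whether the clique number reaches $k$), but it has genuine gaps at the two places where the real work lies.

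The first gap is the case $k\in\{4,5\}$ with maximal clique size at most $k-1$. Saying you would use ``the density machinery underlying the Lov\'asz--Yemini theorem together with its refinements to clique-bounded graphs of lower connectivity'' names no actual tool. The tools are \Cref{t:ess6} and \Cref{t:cyc5} (Theorems 3.9 and 3.10 of \cite{JSS07}): one must prove that a $5$-regular vertex-transitive graph with no $K_5$ is essentially $6$-connected, and that a $4$-regular vertex-transitive graph with no $K_4$ is cyclically $5$-edge-connected. Neither statement is routine, and nothing in your outline produces them. The second gap is in the clique case. Your assertion that the $k$-cliques are pairwise vertex-disjoint is true but not immediate: a priori two $k$-cliques in a non-complete $k$-regular graph can share $k-1$ vertices, and this must be excluded (for instance by comparing the number of edges induced on the neighbourhoods of a shared and an unshared vertex, which vertex-transitivity forces to agree). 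More seriously, the three-tree count only yields the necessary inequality $\frac{k}{2}m \ge 3(m-1)$ on the number of connecting edges; establishing that three edge-disjoint spanning trees actually exist (Nash--Williams requires a bound for every partition of the bodies, which in turn needs the edge-connectivity of vertex-transitive graphs), that rigidity survives the deletion of any single edge, and that the answer lands exactly at the stated constants $28$ and $11$ rather than at the naive counting bounds, is precisely the content of the theorem and is deferred in your write-up to ``explicit criteria or, if need be, computation''. As it stands the proposal is a plausible roadmap with the two decisive steps missing.
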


In this paper we shall be interested in two other closely related but distinct classes of graphs;
\emph{edge-transitive graphs} and \emph{distance-regular graphs} (see \Cref{sec:et,sec:dt} for their respective definitions).
We shall prove the following two results.

\begin{theorem}\label{t:maine}
    Let $G$ be a connected edge-transitive graph with minimum degree $\delta$ and maximum degree $\Delta$.
    Then $G$ is globally rigid if and only if either:
    \begin{enumerate}[(i)]
	    \item $\delta \geq 4$,
	    \item $\delta = 3$ and $\Delta \geq 6$, or
	    \item $G$ is one of the graphs featured in \Cref{fig:cbp}, or $G$ is a complete graph.
	\end{enumerate}
\end{theorem}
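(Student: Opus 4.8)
The plan is to begin with the classical structure theorem for edge-transitive graphs: a connected edge-transitive graph is either vertex-transitive, and hence $k$-regular for some $k$, or else bipartite with the two colour classes equal to the two vertex-orbits of $\mathrm{Aut}(G)$, in which case $G$ is biregular with the two sides having degrees $\delta$ and $\Delta$. This splits the argument into a \emph{regular case} and a \emph{bipartite biregular case}.

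In the regular case $\delta = \Delta = k$, and I would reduce directly to \Cref{t:jss07}. For $k \geq 6$ the graph is globally rigid, consistent with $(i)$. For $k \leq 3$, global rigidity forces $3$-connectivity and redundant rigidity, and the latter needs $|E| = kn/2 \geq 2n-2$; this already forces $n \leq 4$ when $k=3$ and $n \leq 2$ when $k \leq 2$, so the only survivors are complete graphs, matching $(iii)$. The genuine work is $k \in \{4,5\}$, where $(i)$ claims global rigidity with no exceptions while \Cref{t:jss07} excludes graphs with clique number $\geq 4$ (resp. $\geq 5$) and at least $12$ (resp. $29$) vertices. Here I would use that in an edge-transitive graph every edge lies in the same number $\lambda$ of triangles: a clique of size exceeding $3$ (resp. $4$) forces $\lambda \geq 2$ (resp. $\lambda \geq 3$), and since $\lambda \leq k-1$ with equality forcing $G = K_{k+1}$, propagating the constant-$\lambda$ condition along the edges incident to a maximal clique shows the only such graphs are complete or belong to a short list of small graphs. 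Each of these has few enough vertices for \Cref{t:jss07} to certify global rigidity, so $(i)$ holds throughout the regular case.

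In the bipartite biregular case $G$ has clique number at most $2$, and I would work through the Jackson--Jord\'an characterisation that, for $|V| \geq 4$, global rigidity is equivalent to being $3$-connected and redundantly rigid. For necessity, $3$-connectivity forces $\delta \geq 3$ (leaving only the complete graph $K_2$ when $\delta \leq 2$), while redundant rigidity requires $|E| \geq 2|V|-2$. Writing $a,b$ for the two side-sizes with $b$ the side of degree $\delta$, and using $a\Delta = b\delta$, this inequality rearranges, when $\delta = 3$, to $\Delta \geq 6 - 12/(b+2)$; hence $\Delta \geq 6$ is forced once $b$ is large, and $\Delta \in \{4,5\}$ is possible only for boundedly many small graphs (explicitly $b \leq 4$ when $\Delta = 4$ and $b \leq 10$ when $\Delta = 5$). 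Checking this finite list by hand isolates exactly the globally rigid graphs of \Cref{fig:cbp}, the remaining members failing redundant rigidity and hence global rigidity.

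The hard direction is sufficiency in the bipartite case: showing that every edge-transitive biregular graph with $\delta \geq 4$, or with $\delta = 3$ and $\Delta \geq 6$, is genuinely globally rigid rather than merely satisfying the counts. I would obtain $3$-connectivity from the high edge-connectivity forced by edge-transitivity together with biregularity, which is comparatively routine; the main obstacle is redundant rigidity of these infinite families. For this I would aim to show that after deleting any single edge the graph still contains a spanning rigid subgraph, exploiting the uniform local structure imposed by edge-transitivity and a Laman-type sparsity count applied uniformly to all subgraphs (controlled by biregularity) to exclude the overbraced-but-locally-flexible configurations that obstruct redundant rigidity. I expect this redundant-rigidity step, especially at the tight boundary $\delta = 3$, $\Delta = 6$, to be where essentially all of the difficulty lies.
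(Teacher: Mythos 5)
Your overall architecture matches the paper's: the split into the regular (vertex-transitive) case and the bipartite biregular case via the structure theorem for edge-transitive graphs (\Cref{p:edgeNotVertex}), the triangle-count invariant to rule out the exceptional clique cases of \Cref{t:jss07} in the regular case (this is the paper's \Cref{l:evtrans}), and the edge count $|E|\geq 2|V|-2$ versus $|E|\leq 2|V|-3$ to settle necessity and isolate $K_{3,4}$, $K_{3,5}$ and $H_{6,10}$ (the paper's \Cref{l:count}). Up to that point your plan is sound.

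The gap is in the sufficiency direction for the infinite bipartite families, which you correctly flag as the hard part but for which you offer no workable mechanism. Two problems. First, ``a Laman-type sparsity count applied uniformly to all subgraphs (controlled by biregularity)'' is not available: biregularity controls $|E(X)|$ only for the whole vertex set, not for arbitrary $X\subset V$, and $|E|>2|V|-3$ alone does not imply rigidity, since dense pieces can be attached to one another too loosely. Second, you have the order of difficulty reversed: redundancy is the easy half and bare rigidity is the hard half. The paper's \Cref{l:main2} gets redundancy almost for free --- since $|E|>2|V|-3$ the graph contains some $(2,3)$-circuit, and edge-transitivity transports it onto every edge, so every edge lies in a circuit --- but this yields redundant rigidity only once rigidity is known. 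To prove rigidity the paper analyses the maximal rigid subgraphs $G_e$: they form a single orbit under $\operatorname{Aut}(G)$, each is redundantly rigid and hence has minimum degree $3$, so (as $\delta\leq 5$) each vertex of the small side lies in exactly one component while each vertex of the large side lies in some fixed number $m$ of them; if the graph were not rigid then $m\geq 2$, and the rank formula of \Cref{l:rformula} would give $\sum_i(2|X_i|-3)\geq 2|V|>2|V|-3$, contradicting the rank bound of the rigidity matroid. Nothing in your proposal substitutes for this component analysis, so the global rigidity of the $\delta\geq 4$ and $\delta=3$, $\Delta\geq 6$ families is not established. (Minor points: $3$-connectivity follows from Watkins' theorem that connected edge-transitive graphs are $\delta$-connected, \Cref{t:etconn}, rather than from edge-connectivity; and $\delta\geq 6$ is dispatched immediately by \Cref{t:lovaszyemini}, so in the paper's scheme the case $\delta=3$, $\Delta\geq 6$ is no harder than $\delta\in\{4,5\}$.)
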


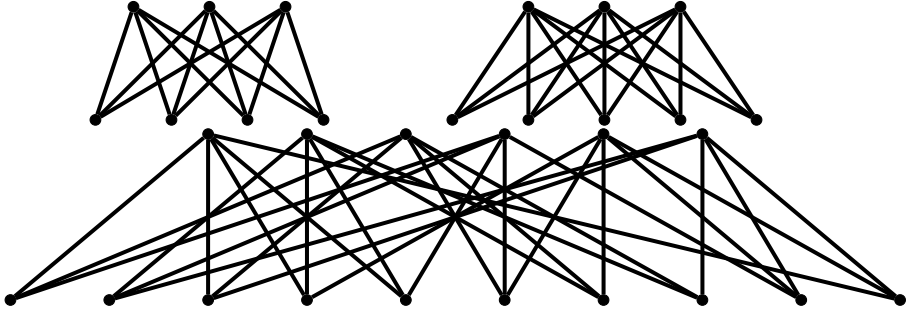
\begin{figure}[htp]
	\begin{center}
        \begin{tikzpicture}
			\node[vertex] (11) at (0.5,1) {};
			\node[vertex] (21) at (1.5,1) {};
			\node[vertex] (31) at (2.5,1) {};
			\node[vertex] (41) at (3.5,1) {};
			
			\node[vertex] (12) at (1,2.5) {};
			\node[vertex] (22) at (2,2.5) {};
			\node[vertex] (32) at (3,2.5) {};
			
			\draw[edge] (11)edge(12);
			\draw[edge] (11)edge(22);
			\draw[edge] (11)edge(32);
			
			\draw[edge] (21)edge(12);
			\draw[edge] (21)edge(22);
			\draw[edge] (21)edge(32);
			
			\draw[edge] (31)edge(12);
			\draw[edge] (31)edge(22);
			\draw[edge] (31)edge(32);
			
			\draw[edge] (41)edge(12);
			\draw[edge] (41)edge(22);
			\draw[edge] (41)edge(32);
		\end{tikzpicture}\qquad\qquad
        \begin{tikzpicture}
			\node[vertex] (01) at (0,1) {};
			\node[vertex] (11) at (1,1) {};
			\node[vertex] (21) at (2,1) {};
			\node[vertex] (31) at (3,1) {};
			\node[vertex] (41) at (4,1) {};
			
			\node[vertex] (12) at (1,2.5) {};
			\node[vertex] (22) at (2,2.5) {};
			\node[vertex] (32) at (3,2.5) {};
			
			\draw[edge] (01)edge(12);
			\draw[edge] (01)edge(22);
			\draw[edge] (01)edge(32);
			
			\draw[edge] (11)edge(12);
			\draw[edge] (11)edge(22);
			\draw[edge] (11)edge(32);
			
			\draw[edge] (21)edge(12);
			\draw[edge] (21)edge(22);
			\draw[edge] (21)edge(32);
			
			\draw[edge] (31)edge(12);
			\draw[edge] (31)edge(22);
			\draw[edge] (31)edge(32);
			
			\draw[edge] (41)edge(12);
			\draw[edge] (41)edge(22);
			\draw[edge] (41)edge(32);
		\end{tikzpicture}\qquad\qquad
		
        \begin{tikzpicture}[xscale=1.3,yscale=1.1]
			\node[vertex] (0) at (-2.5,2) {};
			\node[vertex] (1) at (-1.5,2) {};
			\node[vertex] (2) at (-0.5,2) {};
			\node[vertex] (3) at (0.5,2) {};
			\node[vertex] (4) at (1.5,2) {};
			\node[vertex] (5) at (2.5,2) {};
			
			\node[vertex] (6) at (-4.5,0) {};
			\node[vertex] (7) at (-3.5,0) {};
			\node[vertex] (8) at (-2.5,0) {};
			\node[vertex] (9) at (-1.5,0) {};
			\node[vertex] (10) at (-0.5,0) {};
			
			\node[vertex] (11) at (0.5,0) {};
			\node[vertex] (12) at (1.5,0) {};
			\node[vertex] (13) at (2.5,0) {};
			\node[vertex] (14) at (3.5,0) {};
			\node[vertex] (15) at (4.5,0) {};
			
			\draw[edge] (0)edge(6);
			\draw[edge] (0)edge(8);
			\draw[edge] (0)edge(9);
			\draw[edge] (0)edge(10);
			\draw[edge] (0)edge(15);
			
			\draw[edge] (1)edge(7);
			\draw[edge] (1)edge(9);
			\draw[edge] (1)edge(10);
			\draw[edge] (1)edge(12);
			\draw[edge] (1)edge(13);
			
			\draw[edge] (2)edge(6);
			\draw[edge] (2)edge(8);
			\draw[edge] (2)edge(11);
			\draw[edge] (2)edge(12);
			\draw[edge] (2)edge(13);
			
			\draw[edge] (3)edge(6);
			\draw[edge] (3)edge(7);
			\draw[edge] (3)edge(10);
			\draw[edge] (3)edge(11);
			\draw[edge] (3)edge(14);
			
			\draw[edge] (4)edge(9);
			\draw[edge] (4)edge(11);
			\draw[edge] (4)edge(12);
			\draw[edge] (4)edge(14);
			\draw[edge] (4)edge(15);
			
			\draw[edge] (5)edge(7);
			\draw[edge] (5)edge(8);
			\draw[edge] (5)edge(13);
			\draw[edge] (5)edge(14);
			\draw[edge] (5)edge(15);
		\end{tikzpicture}
	\end{center}
	\caption{The three special cases of \Cref{t:maine}.
	The top two graphs are the complete bipartite graphs $K_{3,4}$ and $K_{3,5}$ respectively.
	The bottom graph, which we call $H_{6,10}$, is described fully in \Cref{appendix}.
	}\label{fig:cbp}
\end{figure}

\begin{theorem}\label{t:maind}
   Let $G$ be a connected distance-regular graph with degree $k$.
   Then $G$ is globally rigid if and only if $k \geq 4$ or $G$ is complete.
\end{theorem}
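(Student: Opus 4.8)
The plan is to route everything through the combinatorial characterisation of planar global rigidity due to Jackson and Jordán: a graph is globally rigid in $\mathbb{R}^2$ if and only if it is a complete graph on at most three vertices, or it is both $3$-connected and \emph{redundantly rigid} (i.e.\ it remains rigid after the deletion of any single edge). Since a connected distance-regular graph is $k$-regular, the first step is to pin down its connectivity, and here I would invoke the theorem of Brouwer and Koolen that the vertex connectivity of a connected distance-regular graph equals its valency $k$. In particular every such graph with $k \geq 4$ is $4$-connected, hence $3$-connected, so for $k \geq 4$ the whole difficulty is concentrated in establishing redundant rigidity, while for $k \leq 3$ the task is to rule everything out except the complete graphs.

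For the necessity direction ($k \leq 3$) I would argue by edge counting. Redundant rigidity of a graph on $n$ vertices forces at least $2n-2$ edges, since deleting one edge must leave the $2n-3$ edges needed for rigidity; a $k$-regular graph has exactly $kn/2$ edges, and $kn/2 \geq 2n-2$ already fails once $k \leq 3$ and $n \geq 5$. The finitely many small cases are immediate: the only connected distance-regular graphs with $k \leq 3$ and $n \leq 4$ are $K_2$, $K_3$ and $K_4$, and the cycles $C_n$ with $n \geq 4$ are not even $3$-connected. Thus for $k \leq 3$ exactly the complete graphs are globally rigid, as claimed. At the opposite extreme, when $k \geq 6$ the Brouwer–Koolen bound makes $G$ at least $6$-connected, whence global rigidity follows at once from the classical fact (Lovász–Yemini, refined by Jackson–Jordán) that every $6$-connected graph is globally rigid in the plane.

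This leaves the two delicate cases $k \in \{4,5\}$, where the connectivity sits below the threshold that guarantees rigidity and one must exploit the distance-regular structure directly. The key observation is that in a distance-regular graph the subgraph induced on the neighbourhood of any vertex is $a_1$-regular on $k$ vertices, where $a_1$ is the common number of neighbours of two adjacent vertices; consequently the clique number of $G$ is one more than the largest clique number among these local graphs. Combining the valency with the parity constraint that $a_1 k$ be even, a short case check shows that every non-complete distance-regular graph with $k \in \{4,5\}$ is triangle-rich only very locally: the relevant local graphs are $2K_2$, $C_4$ or $C_5$ (or edgeless), all triangle-free, so $G$ has clique number at most $3$. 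This places such graphs exactly in the low-clique regime in which \Cref{t:jss07} already predicts global rigidity in the vertex-transitive world.

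The main obstacle is to transfer that conclusion to distance-regular graphs, which need not be vertex-transitive, so \Cref{t:jss07} cannot be cited as a black box. I would resolve this by isolating from the proof of \Cref{t:jss07} the underlying rigidity statement, which should rely only on regularity, high connectivity and the clique bound rather than on full vertex-transitivity, and then check that our graphs satisfy its hypotheses; any residual cases can be cleared using the finiteness of the class of distance-regular graphs of each fixed valency, which reduces $k \in \{4,5\}$ to an explicit finite list on which redundant rigidity is verified directly. Certifying redundant rigidity at connectivity $4$ and $5$ from the clique bound alone is, I expect, the crux of the entire argument.
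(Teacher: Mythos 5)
Your treatment of $k \leq 3$ (edge counting against redundant rigidity) and of $k \geq 6$ (Lov\'asz--Yemini via the Brouwer--Koolen connectivity bound) is correct and matches the paper in spirit. The genuine gap is in the cases $k \in \{4,5\}$, which you yourself identify as ``the crux'' but do not actually resolve, and both of the escape routes you sketch fail. First, the clique-number route: even granting that the local graphs are triangle-free and hence that $G$ has clique number at most $3$, this does not let you import \Cref{t:jss07}, because its proof in the low-clique regime genuinely uses vertex-transitivity to propagate local structure around the graph; there is no ready-made ``underlying rigidity statement'' depending only on regularity, connectivity and clique number waiting to be extracted, so you would have to prove one, and you have not indicated how. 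Second, the finiteness fallback: although the Bannai--Ito conjecture (now a theorem of Bang, Dubickas, Koolen and Moulton) guarantees only finitely many distance-regular graphs of valency $4$ or $5$, the explicit lists are not known --- the paper points this out expressly for valency $4$ --- so ``verify redundant rigidity directly on the finite list'' is not an available move.

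What actually closes the gap in the paper is the second, stronger half of the Brouwer--Koolen theorem (\Cref{t:distconn}), which you quote only in its weak form: not merely that the vertex connectivity equals $k$, but that every separating set of size exactly $k$ is the neighbourhood of a single vertex. For $k=5$ this immediately forces any separation leaving at least two vertices on each side to have a separator of size at least $6$, i.e.\ $G$ is essentially $6$-connected, and \Cref{t:ess6} applies. For $k=4$ the same structural fact is used to show that, apart from $K_5$ and the octahedron (checked by hand), $G$ is cyclically $5$-edge-connected, so \Cref{t:cyc5} applies. Replacing your clique-number discussion with this separating-set argument is what is needed to complete the proof.
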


By combining \Cref{t:jss07,t:maine,t:maind},
we can also characterise which graphs are globally rigid solely from their degree from the following list of graph classes:
distance-transitive graphs, arc-transitive graphs, $t$-transitive graphs, Cayley graphs (for Cayley graphs of degree 4 or 5, we will also require their maximal clique size) and strongly regular graphs.
The links between the various classes described can be seen illustrated at \cite{wiki}.
The only related graph class that is not characterised by \Cref{t:jss07,t:maine,t:maind} is the class of regular graphs.
However in this case, we can easily construct, for any choice of $k$, a $k$-regular graph that is not globally rigid  by deleting an edge from each of two $k$-regular graphs and then joining the two graphs as shown in \Cref{fig:7reg}.

\begin{figure}[htp]
	\begin{center}
        \begin{tikzpicture}[scale=1.2]
		\begin{scope}[rotate=22.5]
			\node[vertex] (1) at (1,0) {};
			\node[vertex] (2) at ({cos(45)},{sin(45)}) {};
			\node[vertex] (3) at ({cos(90)},{sin(90)}) {};
			\node[vertex] (4) at ({cos(135)},{sin(135)}) {};
			\node[vertex] (5) at ({cos(180)},{sin(180)}) {};
			\node[vertex] (6) at ({cos(225)},{sin(225)}) {};
			\node[vertex] (7) at ({cos(270)},{sin(270)}) {};
			\node[vertex] (8) at ({cos(315)},{sin(315)}) {};
			
			\draw[edge] (1)edge(2);
			\draw[edge] (1)edge(3);
			\draw[edge] (1)edge(4);
			\draw[edge] (1)edge(5);
			\draw[edge] (1)edge(6);
			\draw[edge] (1)edge(7);
			\draw[edge] (1)edge(8);
			\draw[edge] (2)edge(3);
			\draw[edge] (2)edge(4);
			\draw[edge] (2)edge(5);
			\draw[edge] (2)edge(6);
			\draw[edge] (2)edge(7);
			\draw[edge] (2)edge(8);
			\draw[edge] (3)edge(4);
			\draw[edge] (3)edge(5);
			\draw[edge] (3)edge(6);
			\draw[edge] (3)edge(7);
			\draw[edge] (3)edge(8);
			\draw[edge] (4)edge(6);
			\draw[edge] (4)edge(7);
			\draw[edge] (4)edge(8);
			\draw[edge] (5)edge(6);
			\draw[edge] (5)edge(7);
			\draw[edge] (5)edge(8);
			\draw[edge] (6)edge(7);
			\draw[edge] (6)edge(8);
			\draw[edge] (7)edge(8);
		\end{scope}
		\begin{scope}[shift={(-3,0)},rotate=22.5]
			\node[vertex] (9) at (1,0) {};
			\node[vertex] (10) at ({cos(45)},{sin(45)}) {};
			\node[vertex] (11) at ({cos(90)},{sin(90)}) {};
			\node[vertex] (12) at ({cos(135)},{sin(135)}) {};
			\node[vertex] (13) at ({cos(180)},{sin(180)}) {};
			\node[vertex] (14) at ({cos(225)},{sin(225)}) {};
			\node[vertex] (15) at ({cos(270)},{sin(270)}) {};
			\node[vertex] (16) at ({cos(315)},{sin(315)}) {};
			
			\draw[edge] (9)edge(10);
			\draw[edge] (9)edge(11);
			\draw[edge] (9)edge(12);
			\draw[edge] (9)edge(13);
			\draw[edge] (9)edge(14);
			\draw[edge] (9)edge(15);
			\draw[edge] (10)edge(11);
			\draw[edge] (10)edge(12);
			\draw[edge] (10)edge(13);
			\draw[edge] (10)edge(14);
			\draw[edge] (10)edge(15);
			\draw[edge] (10)edge(16);
			\draw[edge] (11)edge(12);
			\draw[edge] (11)edge(13);
			\draw[edge] (11)edge(14);
			\draw[edge] (11)edge(15);
			\draw[edge] (11)edge(16);
			\draw[edge] (12)edge(13);
			\draw[edge] (12)edge(14);
			\draw[edge] (12)edge(15);
			\draw[edge] (12)edge(16);
			\draw[edge] (13)edge(14);
			\draw[edge] (13)edge(15);
			\draw[edge] (13)edge(16);
			\draw[edge] (14)edge(15);
			\draw[edge] (14)edge(16);
			\draw[edge] (15)edge(16);
		\end{scope}
		
		\draw[edge] (4)edge(9);
		\draw[edge] (5)edge(16);
		\end{tikzpicture}
	\end{center}
	\caption{A 7-regular not globally rigid graph formed from two copies of $K_8$ by deleting an edge from each and joining them together}\label{fig:7reg}
\end{figure}
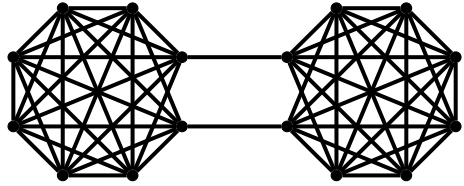

The paper is structured as follows.
In \Cref{sec:rigid} we shall briefly cover all the background knowledge of combinatorial rigidity theory that will be required throughout the paper.
In \Cref{sec:et} we shall provide some background on edge-transitive graphs before proving \Cref{t:maine}.
In \Cref{sec:dt} we shall likewise provide some background on distance-regular graphs before proving \Cref{t:maind}, and we will also achieve a quick corollary regarding strongly regular graphs from this.
In \Cref{sec:rig},
we shall classify which of the three main families of graphs we have previously mentioned are rigid (see \Cref{sec:rigid}) but not globally rigid.

\section{Combinatorial rigidity and global rigidity}\label{sec:rigid}

A realisation $p$ of a graph $G=(V,E)$ is called \emph{rigid} if there exists $\varepsilon >0$ so that any equivalent realisation $q$ where $\|q(x) - p(x)\| < \varepsilon$ for all $x \in V$ is also congruent.
This can immediately be seen to be a weaker version of global rigidity, i.e., any globally rigid realisation must also be rigid.
Like global rigidity, rigidity is a generic property \cite{AsRo78},
hence we say that a graph is \emph{rigid} if it has a generic realisation that is rigid.
It was shown by Pollaczek-Geiringer \cite{Poll1927} that rigidity can be described by a simple combinatorial characterisation.
We define a graph $G=(V,E)$ to be \emph{sparse} if $|E(X)| \leq 2|X| - 3$ for all $X \subset V$ with $|X| \geq 2$, where $E(X)$ is the set of edges with both ends in $X$,
and we say a sparse graph is \emph{tight} if $|E| = 2|V| - 3$ also.\footnote{These properties are also refereed to as \emph{(2,3)-sparse} and \emph{(2,3)-tight} in the wider literature.}

\begin{theorem}[\cite{Poll1927}]\label{t:r}
    A graph with two or more vertices is rigid if and only if it contains a spanning tight subgraph.
\end{theorem}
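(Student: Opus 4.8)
The plan is to translate the geometric notion of generic rigidity into the language of the \emph{rigidity matrix} and then into a purely combinatorial matroidal statement. For a realisation $p$ of $G=(V,E)$, define the rigidity matrix $R(G,p)$ to be the $|E|\times 2|V|$ matrix whose row indexed by an edge $xy$ has the vector $p(x)-p(y)$ in the two columns associated with $x$, the vector $p(y)-p(x)$ in the two columns associated with $y$, and zeros elsewhere. The kernel of $R(G,p)$ consists of the infinitesimal flexes of $(G,p)$, and it always contains the $3$-dimensional space of trivial flexes arising from the planar isometries whenever the points $p(V)$ affinely span $\mathbb{R}^2$ (which holds for every generic $p$). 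Call $(G,p)$ infinitesimally rigid if $\rank R(G,p)=2|V|-3$. The key analytic input, due to Asimow and Roth \cite{AsRo78}, is that a generic realisation is rigid if and only if it is infinitesimally rigid; since the rank of $R(G,p)$ is lower semicontinuous and hence attains its maximum on the generic locus, $G$ is rigid if and only if $\rank R(G,p)=2|V|-3$ for any one (hence every) generic $p$.

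First I would record that the linear (in)dependence of the rows of $R(G,p)$, taken over a generic $p$, defines a matroid on $E$ — the generic rigidity matroid — whose rank equals the generic rank of $R(G,p)$. A maximal independent set of edges is a basis, of size equal to this rank, so $G$ is rigid if and only if the generic rigidity matroid has rank $2|V|-3$, which holds if and only if $G$ contains a spanning subgraph that is a basis. It therefore remains to identify the independent sets and bases of this matroid combinatorially; this is the content of Laman's theorem, asserting that an edge set $F$ is independent precisely when $(V(F),F)$ is $(2,3)$-sparse, so that bases spanning all of $V$ are exactly the spanning $(2,3)$-tight subgraphs. (That a basis is automatically spanning follows since a rigid graph has no isolated vertices, and sparsity forces $V(F)=V$ once $|F|=2|V|-3$.) Granting Laman's theorem, the result follows immediately.

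The necessity half of Laman's theorem — that independence forces $(2,3)$-sparsity — is the short direction: for any $X\subseteq V$ with $|X|\geq 2$, restricting the rows indexed by $E(X)$ to the columns associated with $X$ yields the rigidity matrix of the induced framework on $X$, whose rank is at most $2|X|-3$, so independence of these rows forces $|E(X)|\leq 2|X|-3$. The main obstacle is the sufficiency half: showing that every $(2,3)$-tight graph is generically independent, equivalently infinitesimally rigid. I would prove this by induction via the Henneberg construction, showing first that every $(2,3)$-tight graph on at least three vertices can be reduced — either by deleting a degree-$2$ vertex ($0$-reduction) or by suppressing a degree-$3$ vertex while reinserting one edge among its neighbours so as to preserve $(2,3)$-tightness ($1$-reduction) — down to a single edge. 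The existence of a \emph{valid} $1$-reduction at a degree-$3$ vertex is the delicate step, and relies on a counting/non-separation argument guaranteeing that some reinsertion avoids creating an over-dense subset. One then verifies that the inverse operations preserve generic infinitesimal rigidity, by exhibiting for a generic parent framework an explicit placement of the new vertex (and, for the $1$-extension, a genericity argument that the removed edge can be re-expressed) so that the rank increases by exactly $2$ at each step. Since the base case $K_2$ is trivially rigid, the induction yields that every $(2,3)$-tight graph is rigid, completing the sufficiency direction and hence the theorem.
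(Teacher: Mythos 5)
This theorem is quoted in the paper as a classical result of Pollaczek--Geiringer (and Laman), and the paper gives no proof of its own, so there is nothing internal to compare against. Your sketch follows the standard route --- reduce generic rigidity to generic infinitesimal rigidity via Asimow--Roth, pass to the generic rigidity matroid, prove necessity of $(2,3)$-sparsity by the rank bound on induced sub-frameworks, and prove sufficiency by Henneberg $0$- and $1$-extensions --- and the outline is correct, including the observation that a basis of size $2|V|-3$ must be spanning. The only caveat is that the two genuinely hard steps (the existence of an admissible $1$-reduction at a degree-$3$ vertex, and the rank argument showing a $1$-extension preserves generic infinitesimal rigidity) are named rather than carried out; as written this is a correct proof skeleton rather than a complete proof, which is appropriate for a result the paper itself only cites.
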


Later on we shall require an alternative method for characterising rigidity.
For a sparse graph $G=(V,E)$, we define a subset $X \subset V$ with $|X| \geq 2$ to be \emph{critical} if $|E(X)| = 2|X|-3$.

\begin{lemma}[see {\cite[Lemma 2.3.1]{jordan}}]\label{l:rformula}
    Let $G=(V,E)$ be a graph with $|E| \geq 1$, and let $H=(V,F)$ be a maximal sparse subgraph of $G$.
    Let $X_1,\ldots,X_t$ be the maximal critical subsets of $H$.
    \begin{enumerate}[(i)]
        \item\label{l:rformula1} A subset $X \subset V$ is a maximal critical subset of $H$ if and only if the induced subgraph $G[X] = (X,E(X))$ is a maximal rigid subgraph of $G$.
        \item\label{l:rformula2} $|F| = \sum_{i=1}^t (2|X_i|-3)$.
    \end{enumerate}
\end{lemma}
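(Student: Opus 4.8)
The plan is to identify the maximal critical subsets of $H$ with the vertex sets of the maximal rigid subgraphs (the \emph{rigid components}) of $G$, and then to read off \ref{l:rformula2} from the resulting edge partition. Throughout I use the standard dictionary between $(2,3)$-sparsity and the generic rigidity matroid $\mathcal{R}(G)$ on the edge set $E$: a subgraph is $(2,3)$-sparse precisely when its edge set is independent in $\mathcal{R}(G)$, so that $F$ is a maximal independent set and $|F|$ equals the rank of $E$ in $\mathcal{R}(G)$, while by \Cref{t:r} a subset $X$ with $|X|\geq 2$ induces a rigid subgraph exactly when the restriction of $\mathcal{R}(G)$ to $E(X)$ has rank $2|X|-3$. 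I also use the standard gluing fact that the union of two rigid subgraphs meeting in at least two vertices is again rigid; consequently any two distinct rigid components $G[\bar R_1],\dots,G[\bar R_s]$ share at most one vertex, every edge lies in a unique rigid component, and the sets $E(\bar R_1),\dots,E(\bar R_s)$ therefore partition $E$.

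The heart of the argument is to show that the rigid components do not interact in the matroid, that is, the rank of $E$ equals $\sum_{i}(2|\bar R_i|-3)$. One inequality is immediate: since $F$ is $(2,3)$-sparse we have $|F\cap E(\bar R_i)|\leq 2|\bar R_i|-3$, and summing over the edge partition gives $|F|\leq\sum_i(2|\bar R_i|-3)$. For the reverse inequality I would, using \Cref{t:r}, choose a spanning $(2,3)$-tight subgraph $T_i$ of each component $G[\bar R_i]$ and form the edge-disjoint union $T=\bigcup_i T_i$. The claim is that $T$ is $(2,3)$-sparse; granting this, $|T|=\sum_i(2|\bar R_i|-3)$ is at most the rank of $E$, i.e. at most $|F|$, which yields the reverse inequality and hence equality.

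The sparsity of $T$ is the main obstacle, and I would establish it through the block structure. Because the rigid components pairwise meet in at most one vertex, every $2$-connected subgraph of $T$ — in particular every block of $T$, and of any induced subgraph $T[W]$ — lies inside a single $\bar R_i$, hence is a subgraph of the $(2,3)$-tight graph $T_i$ and is therefore $(2,3)$-sparse. It thus suffices to check that a graph all of whose blocks are $(2,3)$-sparse is itself $(2,3)$-sparse. This follows from the block--cut-tree identity $\sum_{B}(|V(B)|-1)=|V|-1$ for a connected graph: summing the per-block sparsity bounds gives $|E|\leq\sum_B(2|V(B)|-3)=2|V|-2-(\#\text{blocks})\leq 2|V|-3$, and applying this to each connected component of $T[W]$ yields $|E_T(W)|\leq 2|W|-3$ for every $W$ with $|W|\geq 2$.

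With the equality $|F|=\sum_i(2|\bar R_i|-3)$ in hand, the inequality $|F\cap E(\bar R_i)|\leq 2|\bar R_i|-3$ must hold with equality for every $i$, so each $\bar R_i$ is a critical subset of $H$; it is a \emph{maximal} critical subset, since any critical $W\supsetneq\bar R_i$ would make $H[W]$, and hence $G[W]$, rigid by \Cref{t:r}, contradicting the maximality of the component. Conversely, if $X$ is any maximal critical subset of $H$ then $H[X]$ is $(2,3)$-tight, so $G[X]$ is rigid by \Cref{t:r} and $X$ lies in some rigid component $\bar R_i$; as $\bar R_i$ is itself critical and $X$ is maximal critical, $X=\bar R_i$. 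This proves \ref{l:rformula1}. Finally \ref{l:rformula2} is immediate from the edge partition $F=\bigsqcup_i\big(F\cap E(\bar R_i)\big)$ together with $|F\cap E(X_i)|=2|X_i|-3$, giving $|F|=\sum_{i=1}^t(2|X_i|-3)$.
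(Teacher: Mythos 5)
The paper offers no proof of this lemma (it simply cites J\'{o}rdan's lecture notes), so your attempt must stand on its own, and it contains one step that is genuinely false. Your overall plan --- identify the maximal critical sets with the rigid components, prove $|F|=\sum_i(2|\bar R_i|-3)$ by a two-sided rank comparison, and deduce (i) and (ii) --- is reasonable, and the easy inequality $|F|\le\sum_i(2|\bar R_i|-3)$, the ``all blocks sparse $\Rightarrow$ graph sparse'' count, and the final derivation of (i) from the criticality of the $\bar R_i$ are all fine. The problem is the structural claim that, because the rigid components pairwise meet in at most one vertex, every $2$-connected subgraph (in particular every block) of $T=\bigcup_i T_i$ lies inside a single $\bar R_i$. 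Rigid components can be arranged in a cycle: let $G$ consist of four triangles on $\{a_1,a_2,x_1\}$, $\{a_2,a_3,x_2\}$, $\{a_3,a_4,x_3\}$, $\{a_4,a_1,x_4\}$. Then $|E|=12\le 2|V|-3=13$, no union of two or more of these triangles is rigid, so the four triangles are precisely the maximal rigid subgraphs; each is already $(2,3)$-tight, so $T=G$, and $T$ is $2$-connected (hence a single block) while meeting all four components. Thus the block decomposition gives no control, and the sparsity of $T$ --- which carries the entire weight of the reverse inequality and hence of part (\ref{l:rformula2}) --- is left unproved. Note that the sparsity of $T$ is essentially equivalent to the lemma itself, so it cannot be waved through.

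The standard repair avoids $T$ altogether and works inside $H$. For any $X,Y$ with $|X\cap Y|\ge 2$ one has $|F(X)|+|F(Y)|\le |F(X\cup Y)|+|F(X\cap Y)|$, so if $X$ and $Y$ are critical the sparsity bounds on $X\cup Y$ and $X\cap Y$ force $X\cup Y$ to be critical; since every edge $uv\in F$ makes $\{u,v\}$ critical, the maximal critical sets $X_1,\dots,X_t$ pairwise meet in at most one vertex and their edge sets partition $F$, whence $|F|=\sum_i|F(X_i)|=\sum_i(2|X_i|-3)$ immediately --- no auxiliary graph $T$ and no rank computation in $G$ are needed. Part (\ref{l:rformula1}) then follows by matching these sets with your rigid components: $H[X_i]$ tight gives $G[X_i]$ rigid by \Cref{t:r}, and for the converse the maximality of $H$ in $G$ is exactly what guarantees that a rigid induced subgraph $G[Y]$ forces $Y$ to be critical in $H$ (a point your write-up also leans on implicitly when you assert each $\bar R_i$ is critical). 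If you prefer to keep your architecture, you must replace the block argument by a genuine proof that the union of the $T_i$ is independent in the rigidity matroid, e.g.\ via the Lov\'asz--Yemini rank formula; as written, the central claim fails.
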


To match the combinatorial characterisation of rigid graphs,
Jackson and Jord\'{a}n \cite{JaJo05} proved an exact characterisation of globally rigid graphs by describing them as a special type of \emph{redundantly rigid} graphs, i.e., graphs $G=(V,E)$ where $G-e$ is rigid for all edges $e$.

\begin{theorem}[\cite{JaJo05}]\label{t:gr}
    A graph $G$ globally rigid if and only if it is 3-connected and redundantly rigid, or it is a complete graph.
\end{theorem}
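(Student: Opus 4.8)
The statement combines two directions of very different difficulty, together with the degenerate complete graphs $K_1,K_2,K_3$, which are globally rigid but fail to be $3$-connected or redundantly rigid and so must be listed separately; since $K_n$ for $n\geq 4$ is already $3$-connected and redundantly rigid, I would dispose of these small cases by hand and assume henceforth that $|V|\geq 4$. The easier \emph{necessity} direction amounts to Hendrickson's two conditions. For redundant rigidity I would argue by contradiction: if $G-e$ is not rigid for some edge $e=uv$, then the fibre of the edge-measurement map of $G-e$ through a generic realisation $p$ is a positive-dimensional algebraic set, and genericity forces the single function $x\mapsto \|x(u)-x(v)\|$ to be non-constant on it; an irreducibility and intermediate-value argument then produces two non-congruent realisations of $G-e$ that agree on the length of $e$, i.e.\ two non-congruent equivalent realisations of $G$, contradicting global rigidity. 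For $3$-connectivity, a $2$-separation $\{u,v\}$ would let me reflect one side of the separation in the line through $p(u)$ and $p(v)$, again yielding an equivalent but non-congruent realisation.

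The hard direction is \emph{sufficiency}: every $3$-connected redundantly rigid graph on at least four vertices is globally rigid. The plan here is to pass from the combinatorial hypotheses to Connelly's geometric sufficient condition, which guarantees that a generic framework in the plane is globally rigid as soon as it admits an equilibrium stress matrix of rank $|V|-3$, the largest value attainable at a generic point. Thus the entire problem reduces to the single geometric assertion: \emph{every $3$-connected redundantly rigid graph has a generic realisation carrying an equilibrium stress matrix of maximal rank $|V|-3$.}

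To establish this I would induct on $|V|$ using a recursive construction of the graph class. The base case is $K_4$, which carries a stress of rank $1=|V|-3$ in the plane. For the inductive step I would invoke the fact that every $3$-connected redundantly rigid graph can be built from $K_4$ by a sequence of edge additions and $1$-extensions (vertex splits) that never leaves the class; this is exactly where the structure of the $2$-dimensional rigidity matroid, in particular its connectivity and an ear-decomposition of its circuits, does the work. It then remains to check that each generating operation preserves the existence of a generic maximal-rank stress matrix: an edge addition can only enlarge the stress space, while for a $1$-extension one must transfer a rank-$(n-3)$ stress on the smaller graph to a rank-$((n+1)-3)$ stress on the larger one, which is accomplished by a perturbation-and-averaging argument together with a genericity check that the resulting stress matrix has the required rank.

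The main obstacle is clearly the sufficiency direction, and within it two points stand out. The first is the purely combinatorial recursive construction: proving that the class of $3$-connected redundantly rigid graphs is closed under, and generated by, edge additions and $1$-extensions requires a careful analysis of how $3$-connectivity interacts with the connectivity of the rigidity matroid, and it is here that the bulk of the technical effort lies. The second is the geometric bookkeeping needed to guarantee that the maximal-rank stress matrix survives each $1$-extension at a \emph{generic} point; controlling the \rank\ after the perturbation, rather than merely the existence of some stress, is the delicate step. By contrast, the necessity direction and the handling of the complete graphs are routine once the stress-matrix machinery and Hendrickson's reflection and flexing arguments are in place.
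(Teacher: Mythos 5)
The paper does not prove this statement; it is imported verbatim from Jackson and Jord\'an \cite{JaJo05} as a black box, so there is no internal proof to compare against. Your outline faithfully reproduces the architecture of the actual proof in the cited literature: Hendrickson's flexing and reflection arguments for the necessity of redundant rigidity and $3$-connectivity, Connelly's sufficient condition via a generic equilibrium stress matrix of rank $|V|-3$, and the Jackson--Jord\'an combinatorial core showing that $3$-connected redundantly rigid graphs are $M$-connected and can be generated from $K_4$ by edge additions and $1$-extensions, each of which preserves the maximal-rank stress property. Two small corrections: a $1$-extension (delete an edge $uv$, add a new vertex adjacent to $u$, $v$ and a third vertex) is usually called an edge split, not a vertex split --- vertex splitting is a different operation of Whiteley's --- and the Jackson--Jord\'an induction is really run on the class of $3$-connected $M$-connected graphs via an ear decomposition of the rigidity matroid, with ``$3$-connected and redundantly rigid implies $M$-connected'' proved as a separate preliminary step; your sketch gestures at this but folds the two stages together. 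Neither point affects the correctness of the plan, though of course the plan is a roadmap rather than a proof: the two steps you identify as the obstacles are precisely where the roughly thirty pages of \cite{JaJo05} and Connelly's paper are spent.
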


There are various sufficient conditions for guaranteeing a graph is rigid or globally rigid.
We shall require the following three that stem from the connectivity of the graph.

\begin{theorem}[\cite{LoYe82}]\label{t:lovaszyemini}
    Every 6-connected graph is globally rigid.
\end{theorem}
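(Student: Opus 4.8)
The plan is to deduce global rigidity from the Jackson--Jord\'an characterisation in \Cref{t:gr}, so that the whole problem reduces to a statement about the rigidity matroid that I can attack with the rank formula in \Cref{l:rformula}. First I would dispose of the complete graphs directly, and observe that a non-complete $6$-connected graph $G=(V,E)$ has $|V|\geq 8$ (a $6$-connected graph on $7$ vertices has minimum degree $6$, hence is $K_7$) and is trivially $3$-connected. By \Cref{t:gr} it then remains to show that $G$ is redundantly rigid, i.e.\ that $G-e$ is rigid for every edge $e$; equivalently, by \Cref{t:r}, that $\rank(G-e)=2|V|-3$.

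Fix an edge $e$ and let $X_1,\dots,X_t$ be the vertex sets of the maximal rigid subgraphs of $G-e$. By \Cref{l:rformula} these record the rank, $\rank(G-e)=\sum_{i=1}^t(2|X_i|-3)$, and they carry three structural features I would exploit: the sets cover $V$ (since $G-e$ has minimum degree at least $5$, no vertex is isolated), the edge sets $E(X_i)$ partition $E(G-e)$, and any two of them meet in at most one vertex (two rigid subgraphs sharing two vertices would have rigid union, contradicting maximality). Writing $c:=\sum_i|X_i|-|V|$ for the total overlap, a short rearrangement shows that $G-e$ fails to be rigid precisely when $t\geq 2$ and $2c<3(t-1)$. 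The goal is therefore to prove that $6$-connectivity forces the reverse inequality $2c\geq 3(t-1)$.

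The mechanism that produces overlap is connectivity. For a piece $X_i$ containing a vertex that lies in no other piece, its set of shared vertices $A_i:=X_i\cap\bigcup_{j\neq i}X_j$ separates that interior vertex from the rest of $G-e$; since at most the single deleted edge $e$ can bridge such a separation, $A_i$ (together with at most one endpoint of $e$) is a vertex cut of $G$, so $6$-connectivity gives $|A_i|\geq 6$, with the weaker bound $|A_i|\geq 5$ for the at most two pieces whose interior meets an endpoint of $e$. Because every shared vertex lies in at least two pieces, one has $s:=\#\{\text{shared vertices}\}\leq\tfrac12\sum_i|A_i|$, whence $c=\sum_i|A_i|-s\geq\tfrac12\sum_i|A_i|$, a quantity of order $3t$ that comfortably exceeds $\tfrac32(t-1)$.

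I expect the main obstacle to be making this counting airtight in the presence of \emph{thin} pieces, that is, maximal rigid subgraphs all of whose vertices are shared with other pieces (for instance a triangle each of whose vertices also belongs to another piece). Such pieces have no interior vertex, so the cut bound $|A_i|\geq 6$ no longer applies, and one must instead balance the weak bound $|A_i|=|X_i|\geq 2$ against the constraint that every shared vertex has multiplicity at least two; a model computation (e.g.\ when all pieces are triangles) confirms that $2c\geq 3(t-1)$ still holds, but the general bookkeeping is delicate and is precisely the heart of the Lov\'asz--Yemini argument. This is also where the threshold is sharp: the bound $|A_i|\geq 6$ is the only point at which the full strength of $6$-connectivity is used, and $5$-connectivity is genuinely insufficient.
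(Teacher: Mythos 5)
This statement is quoted in the paper as a known result of Lov\'asz and Yemini (together with the Jackson--Jord\'an strengthening that $6$-connectivity forces redundant rigidity), so there is no in-paper proof to compare against; your attempt has to stand on its own, and as written it does not close. The framework is the right one --- reduce to redundant rigidity via \Cref{t:gr}, decompose $G-e$ into maximal rigid pieces $X_1,\dots,X_t$, use \Cref{l:rformula} to reduce rigidity of $G-e$ to the inequality $2c\geq 3(t-1)$ with $c=\sum_i|X_i|-|V|$, and use $6$-connectivity to force overlap. The separation argument for pieces with an interior vertex is also sound: there are no edges of $G-e$ from $X_i\setminus A_i$ to $V\setminus X_i$, so $A_i$ (plus possibly one endpoint of $e$) is a vertex cut and $|A_i|\geq 5$ or $6$.

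The genuine gap is exactly the case you flag and then defer: pieces with no interior vertex. For those, you have no lower bound on $|A_i|=|X_i|$ beyond $2$, and the inequality $c\geq\tfrac12\sum_i|A_i|$ then yields nothing of order $3t$. This is not a bookkeeping formality; a purely local count genuinely fails here. For instance, rewriting $\sum_i(2|X_i|-3)=\sum_v\sum_{X_i\ni v}\bigl(2-3/|X_i|\bigr)$, a vertex $v$ of degree $5$ in $G-e$ lying in a bare-edge piece and a $5$-vertex piece contributes only $4-3\bigl(\tfrac12+\tfrac15\bigr)=1.9<2$, and a vertex interior to a single $6$-vertex piece contributes only $\tfrac32$; so the per-vertex deficits can a priori accumulate far beyond the total slack of $3$, and one needs a global argument (this is where Lov\'asz--Yemini and Jackson--Jord\'an do real work, balancing the number of small, fully-shared pieces against the connectivity of $G$). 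Your ``model computation with all pieces triangles'' handles one configuration but not the mixed configurations, which are the actual content of the theorem. As submitted, the proposal is an accurate roadmap with the central estimate missing, so it should be regarded as incomplete rather than wrong.
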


A graph $G=(V,E)$ is \emph{essentially 6-connected} if:
\begin{enumerate}[(i)]
    \item $G$ is 4-connected,
    \item Given subgraphs $G_1= (V_1,E_1)$ and $G_2= (V_2,E_2)$ where $G = G_1 \cup G_2$, $|V_1\setminus V_2|\geq 3$ and  $|V_2\setminus V_1|\geq 3$,
    we have $|V_1 \cap V_2| \geq 5$.
    \item Given subgraphs $G_1= (V_1,E_1)$ and $G_2= (V_2,E_2)$ where $G = G_1 \cup G_2$, $|V_1\setminus V_2|\geq 4$ and  $|V_2\setminus V_1|\geq 4$,
    we have $|V_1 \cap V_2| \geq 6$.
\end{enumerate}
This new connectivity property gives a generalisation of \Cref{t:lovaszyemini}.

\begin{theorem}[{\cite[Theorem 3.9]{JSS07}}]\label{t:ess6}
    Every essentially 6-connected graph is globally rigid.
\end{theorem}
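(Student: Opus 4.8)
The plan is to verify the hypotheses of \Cref{t:gr}. Since an essentially 6-connected graph $G=(V,E)$ is 4-connected by condition (i), and hence 3-connected, it suffices to show that if $G$ is not complete then it is redundantly rigid; that is, $G-e$ is rigid for every edge $e$. This is exactly the point at which essential 6-connectivity refines \Cref{t:lovaszyemini}: conditions (ii) and (iii) are tailored to forbid precisely those small separations that could destroy rigidity, while permitting the harmless ones in which one side is tiny.

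First I would fix an edge $e=uv$ and argue by contradiction, assuming $G-e$ is not rigid. Let $H$ be a maximal $(2,3)$-sparse subgraph of $G-e$, and let $X_1,\dots,X_t$ be the maximal critical subsets of $H$; by \Cref{l:rformula} these are exactly the vertex sets of the maximal rigid subgraphs (``blocks'') of $G-e$, and $|E(H)| = \sum_{i=1}^t (2|X_i|-3)$. The key structural observation, which I would establish first, is that two distinct blocks meet in at most one vertex: if $|X_i\cap X_j|\geq 2$, then the supermodular inequality $|E(X_i)|+|E(X_j)|\leq |E(X_i\cup X_j)|+|E(X_i\cap X_j)|$ together with $(2,3)$-sparsity forces $X_i\cup X_j$ to be critical, contradicting maximality. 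Consequently every edge of $G-e$ lies in a unique block, the blocks cover $V$ (every vertex is incident to an edge since $G$ has minimum degree at least $4$), and distinct blocks are glued only along single shared vertices. Because $G-e$ is not rigid we have $t\geq 2$ and $\sum_i(2|X_i|-3)=|E(H)|\leq 2|V|-4$.

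The heart of the argument is a counting step extracting a block with small boundary. Writing $S$ for the set of vertices lying in at least two blocks, $d_w$ for the number of blocks through $w\in S$, and $b_i=|X_i\cap S|$ for the boundary of $X_i$, the identities $\sum_i|X_i|=|V|+\sum_{w\in S}(d_w-1)$ and $\sum_i b_i=\sum_{w\in S}d_w$ combine with the deficiency $\sum_i(2|X_i|-3)\leq 2|V|-4$ to bound the average boundary $\tfrac1t\sum_i b_i$ from above, forcing some block $X_i$ to have small boundary. Removing $X_i\cap S$ from $G-e$ separates the private vertices $X_i\setminus S$ from the rest, and re-inserting $e$ enlarges this separator by at most one vertex (an endpoint of $e$). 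The resulting separation $(G_1,G_2)$ of $G$, with $V_1\cap V_2$ essentially $X_i\cap S$, then contradicts either the 4-connectivity of condition (i) when the separator is small, or condition (ii) or (iii) in the balanced cases, where both sides retain at least three (respectively four) private vertices while the separator is smaller than the required $5$ (respectively $6$).

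The main obstacle is the bookkeeping around the deleted edge and the degenerate blocks. Single-edge blocks whose two vertices are both shared inflate the count while yielding no separation, so the averaging must be arranged so that the low-boundary block it produces genuinely has private vertices; here one uses that minimum degree at least $4$ forbids a private vertex with too few neighbours, and in particular rules out a block reducing to a single private vertex attached only through $e$. Equally delicate is checking that, after accounting for the endpoints of $e$, the two sides of the separation are large enough to trigger the correct one of conditions (i)--(iii): verifying that every configuration lands in one of these regimes is where the precise thresholds $5$ and $6$ in the definition of essential 6-connectivity are actually consumed.
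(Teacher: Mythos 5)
The paper does not actually prove this statement: it is imported wholesale as \cite[Theorem 3.9]{JSS07}, so there is no internal proof to compare yours against. On its own terms, your outline follows the standard Lov\'asz--Yemini / Jackson--Jord\'an route (decompose $G-e$ into maximal rigid blocks, note that distinct blocks share at most one vertex, compare $\sum_{i}(2|X_i|-3)$ with $2|V|-4$, and try to extract a block whose boundary is a forbidden separator), and this is indeed the kind of argument used in \cite{JSS07}. So the strategy is sound.

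The gap is that the quantitative core is asserted rather than carried out, and that is where the entire content of the theorem sits. First, the averaging step is never performed: you never derive a bound on $\tfrac{1}{t}\sum_i b_i$ from the deficiency $\sum_i(2|X_i|-3)\le 2|V|-4$ together with $\sum_i|X_i|=|V|+\sum_{w\in S}(d_w-1)$, and getting a usable bound requires a careful weighting of vertices lying in many blocks (this is exactly the Lov\'asz--Yemini trick). Second, and more seriously, a block with small boundary only produces a contradiction if the resulting separation actually violates one of (i)--(iii), and conditions (ii) and (iii) apply only when \emph{both} sides have at least $3$ (resp.\ $4$) private vertices. A block with boundary of size $4$ and one or two interior vertices yields, after re-inserting $e$, a separator of size at most $5$ with one tiny side; this violates none of (i)--(iii), so the contradiction in such configurations cannot come from the connectivity hypotheses at all and must instead be absorbed by the counting (for instance via the minimum-degree condition forcing interior vertices to see at least $4$ vertices of their own block). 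You explicitly flag these degenerate blocks as ``the main obstacle'' but do not resolve them, and without that resolution the proof does not close. As it stands this is a correct map of where a proof would go, not a proof.
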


We say a graph $G=(V,E)$ is \emph{cyclically k-edge-connected} if for all subsets $X \subseteq V$ where the induced subgraphs $G[X]$ and $G[V \setminus X]$ both contain cycles,
there are at least $k$ edges between $X$ and $V \setminus X$.

\begin{theorem}[{\cite[Theorem 3.10]{JSS07}}]\label{t:cyc5}
   Every cyclically 5-edge-connected 4-regular graph is globally rigid.
\end{theorem}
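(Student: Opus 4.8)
The plan is to verify the two conditions of \Cref{t:gr}: that $G$ is $3$-connected and redundantly rigid (the only $4$-regular complete graph is $K_5$, which is globally rigid). Throughout I would exploit the fact that a $4$-regular graph has no bridge — deleting a bridge would leave a component with exactly one vertex of odd degree — so $G$ is $2$-edge-connected and each $G-e$ is connected.

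First I would establish $3$-connectivity. Suppose $\{x,y\}$ were a $2$-separator with sides $A$ and $B$. Since $\deg(x)+\deg(y)=8$, the edges from $A\cup B$ to $\{x,y\}$ number at most $8$, so one side, say $A$, sends at most $4$ edges to $\{x,y\}$; as there are no $A$--$B$ edges, these form precisely the edge cut $(A\mid V\setminus A)$, of size at most $4$. A short degree count shows $|A|\ge 3$ and that both $G[A]$ and $G[V\setminus A]$ contain more edges than a spanning forest, hence a cycle. This exhibits a cyclic $4$-edge-cut, contradicting cyclic $5$-edge-connectivity.

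The core of the argument is a single claim, applied to $H\in\{G,G-e\}$: if $H$ is not rigid, then $G$ has a cyclic edge cut of size at most $4$. By \Cref{l:rformula}, let $X_1,\dots,X_t$ be the vertex sets of the maximal rigid subgraphs of $H$; two of them meet in at most one vertex, since otherwise their union would be a larger rigid subgraph. Hence the edge sets $E(X_i)$ partition $E(H)$ and the $X_i$ cover $V$ (every vertex has degree at least $3$). Writing $c_i:=|E(X_i)|-(2|X_i|-3)\ge 0$ for the excess of each block, non-rigidity together with \Cref{l:rformula} gives $\sum_i(2|X_i|-3)\le 2|V|-4$, whence $\sum_i c_i=|E(H)|-\sum_i(2|X_i|-3)\ge 3$. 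In particular some block $X_j$ has $c_j\ge 1$. Counting degrees inside $X_j$ in the (almost) $4$-regular graph, the number of edges of $G$ leaving $X_j$ equals $6-2c_j\le 4$ — the deleted edge, if present, exactly compensates for the two degree-$3$ vertices of $G-e$. Since $c_j\ge 1$ forces $|X_j|\ge 4$, the block $G[X_j]$ certainly contains a cycle, and a matching edge count handles its complement, so this is again a cyclic edge cut of size at most $4$, a contradiction. Applying the claim to $H=G$ shows $G$ is rigid, and to each $H=G-e$ shows $G$ is redundantly rigid, so \Cref{t:gr} finishes the proof.

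The step I expect to cause the most trouble is the very last one: guaranteeing that \emph{both} sides of the cut around $X_j$ actually contain a cycle. The bound $c_j\ge 1$ deals with $G[X_j]$ cleanly, but the complement $V\setminus X_j$ can in principle be as small as one or two vertices, in which case it carries no cycle and the cut is not cyclic. These degenerate configurations must be excluded separately — typically by observing that a single vertex (or edge) attached to a rigid block by at least two edges is already rigid, contradicting the non-rigidity of $H$, or by disposing of the finitely many small exceptional graphs directly. Making this case analysis airtight, rather than the main counting identity $\mathrm{out}_G(X_j)=6-2c_j$, is where the real work lies.
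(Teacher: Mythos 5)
This theorem is quoted in the paper from \cite{JSS07} without proof, so there is no in-paper argument to compare against; judged on its own, your proposal is essentially a correct, self-contained proof. The $3$-connectivity step is sound (if $\{x,y\}$ is a $2$-separator, one side $A$ receives at most $4$ of the $8$ edge-ends at $\{x,y\}$, and the degree count $|E(A)|\ge 2|A|-2>|A|-1$ together with $|A|\ge 3$ produces a cyclic cut of size at most $4$; the same count applies to $V\setminus A$). The core claim also checks out: maximal rigid subgraphs pairwise share at most one vertex, so their edge sets partition $E(H)$; \Cref{t:r} and \Cref{l:rformula} give $\sum_i(2|X_i|-3)\le 2|V|-4$ when $H$ is not rigid, hence $\sum_i c_i\ge 3$ and some $c_j\ge 1$; and the identity $\mathrm{out}_G(X_j)\le 6-2c_j\le 4$ is correct in both cases $H=G$ and $H=G-e$ (with the deleted edge only lowering the bound further when it lies inside $X_j$). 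The one loose end is exactly the one you flag: you must rule out $|V\setminus X_j|\le 2$. Your sketched fix works and is short: $|V\setminus X_j|=2$ is impossible because two vertices of degree $4$ joined by at most one edge send at least $6$ edges into $X_j$, contradicting $\mathrm{out}_G(X_j)\le 4$; $|V\setminus X_j|=1$ is impossible because the remaining vertex retains at least two (in fact at least three) edges into the rigid block $X_j$ even after deleting $e$, so $H$ would be rigid by vertex addition; and $|V\setminus X_j|=0$ means $X_j=V$ and $H$ is rigid. With that paragraph inserted, the argument via \Cref{t:gr} is complete. The only cosmetic issues are that the bridge-freeness observation is never actually used, and that $c_j\ge 1$ gives $|X_j|\ge 4$ a little more than you need (containing a cycle already follows from $|E_H(X_j)|\ge 2|X_j|-2\ge |X_j|$).
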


\section{Edge-transitive graphs}\label{sec:et}

A graph $G$ is \emph{edge-transitive} if for every pair of edges $e,f$ there exists an automorphism $\pi:G \rightarrow G$ with $\pi(e) = f$.
Our interest in edge-transitive graphs can be seen to stem from the following result.

\begin{theorem}[\cite{Watkins}]\label{t:etconn}
    Let $G$ be a connected edge-transitive graph with minimum degree $k$.
    Then $G$ is $k$-connected.
\end{theorem}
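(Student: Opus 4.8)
The plan is to argue by contradiction using the classical theory of \emph{atoms} from connectivity theory, with the hypothesis of edge-transitivity entering through a single clean observation. If $G$ is complete the statement is immediate, since $K_n$ is $(n-1)$-connected with minimum degree $n-1$; so I assume $G$ is not complete and suppose, for contradiction, that the vertex-connectivity $\kappa$ of $G$ satisfies $\kappa < k$. For a nonempty proper subset $F \subsetneq V$ let $N(F)$ denote its set of outer neighbours; I call $F$ a \emph{fragment} if $|N(F)| = \kappa$ and the opposite side $V \setminus (F \cup N(F))$ is nonempty, and an \emph{atom} is a fragment of minimum cardinality. Because $G$ is not complete such separations exist, so atoms exist. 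Two facts are then immediate: every automorphism of $G$ carries atoms to atoms, since it preserves both $|N(\cdot)|$ and the nonemptiness of the opposite side; and every atom has at least two vertices, for a singleton atom $\{v\}$ would give $\kappa = |N(v)| = \deg(v) \ge k$, contradicting $\kappa < k$.

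The crucial step uses edge-transitivity. I fix one atom $A$, which by the previous remark contains an edge $ab$. For an arbitrary edge $cd$ of $G$, edge-transitivity provides an automorphism $\pi$ with $\pi(\{a,b\}) = \{c,d\}$, and then $\pi(A)$ is an atom containing both $c$ and $d$; hence \emph{every edge of $G$ is contained in some atom}. Granting the classical fact that distinct atoms are pairwise disjoint, I finish as follows. Since $G$ has minimum degree at least one, every vertex is an endpoint of an edge and therefore lies in an atom, so the atoms partition $V$; and since each edge lies within a single atom, no edge joins two distinct atoms. As $G$ is connected, this forces a unique atom equal to the whole of $V$, contradicting the fact that an atom, being a fragment, is a proper subset of $V$ with nonempty opposite side. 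Therefore $\kappa \ge k$, and since the vertex-connectivity never exceeds the minimum degree, we conclude that $G$ is $k$-connected.

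The main obstacle is the disjointness of distinct atoms, which is the single genuinely technical ingredient; everything else is bookkeeping. I would derive it by the standard uncrossing argument built on submodularity: each fragment $F$ determines a separation of $G$ of order $\kappa$, with sides $F$ and $V \setminus (F \cup N(F))$ meeting in the interface $N(F)$, and the order of a separation is submodular under the natural intersection and union of separations. Consequently, if two atoms $A$ and $B$ were to cross, submodularity would yield a corner separation of order at most $\kappa$ one of whose sides is strictly contained in an atom --- that is, a nonempty fragment smaller than the minimum possible size, which is absurd. The only delicacy is a short case analysis to dispose of degenerate crossings in which a corner side is empty, and these are treated directly. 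I note finally that this route never needs the structural dichotomy for edge-transitive graphs (vertex-transitive versus bipartite biregular): edge-transitivity is invoked solely to force every edge into an atom, which is what makes the argument uniform.
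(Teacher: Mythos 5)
The paper states this theorem without proof, importing it directly from Watkins \cite{Watkins}, so there is no in-paper argument to compare against; what you have written is essentially a reconstruction of Watkins's original atom-based proof. The global structure is sound: fragments and atoms exist because $G$ is not complete, atoms have at least two vertices because $\kappa<k$ rules out singleton fragments, edge-transitivity pushes every edge into an atom, and disjointness of atoms then forces the atoms to partition $V$ with no edges between distinct atoms, contradicting connectedness. Edge-transitivity enters in exactly one place, as you note, and the bipartite/biregular dichotomy of \Cref{p:edgeNotVertex} is indeed not needed. Two points deserve attention. First, your justification that the fixed atom $A$ contains an edge does not follow from $|A|\ge 2$ alone; an edgeless atom must be excluded. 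This is immediate, however: if $v\in A$ had no neighbour inside $A$, then $N(v)\subseteq N(A)$, so $\deg(v)\le\kappa<k$, contradicting the minimum-degree hypothesis (equivalently, atoms induce connected subgraphs, since a component of a disconnected fragment is a strictly smaller fragment). Second, the pairwise disjointness of atoms carries all the technical weight, and you correctly isolate it; the uncrossing argument you sketch is the standard one, with the degenerate case of an empty opposite corner $\overline{A}\cap\overline{B}$ disposed of via the minimality inequalities $|A|\le|\overline{A}|$ and $|B|\le|\overline{B}|$, which is presumably the ``short case analysis'' you allude to. With those two details filled in, the argument is correct and matches the classical proof of the cited result.
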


\begin{lemma}\label{l:evtrans}
    Let $G=(V,E)$ be a connected edge-transitive and vertex-transitive graph with degree $k$.
    Then $G$ is globally rigid if and only if $k \geq 4$ or $G$ is complete.
\end{lemma}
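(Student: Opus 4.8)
The plan is to reduce the whole statement to the vertex-transitive characterisation \Cref{t:jss07}, using edge-transitivity only to pin down the maximal clique size in the two borderline degrees $k\in\{4,5\}$. Since $G$ is vertex-transitive it is regular of some degree $k$, so \Cref{t:jss07} applies verbatim. For $k\geq 6$ condition~(i) gives global rigidity at once, and a complete graph is globally rigid by definition, so the ``if'' direction is already settled except when $k\in\{4,5\}$ and $G$ is non-complete. Conversely, if $k\leq 3$ and $G$ is not complete, then none of the conditions (i)--(iv) of \Cref{t:jss07} hold, so $G$ is not globally rigid; this is exactly the ``only if'' direction. Hence the entire content of the lemma is the assertion that, for $k\in\{4,5\}$, a non-complete such graph satisfies $\omega(G)\leq k-1$, which triggers condition~(ii) (maximal clique at most $4$, for $k=5$) or condition~(iii) (maximal clique at most $3$, for $k=4$).

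To prove this clique bound I would first isolate the one consequence of edge-transitivity that I need: any automorphism preserves the number of common neighbours of the two endpoints of an edge, so edge-transitivity forces a constant $\lambda$ equal to the number of triangles through every edge. Regularity gives the standard fact that a non-complete connected $k$-regular graph contains no $K_{k+1}$ (such a clique would be an entire connected component), so $\omega(G)\leq k$, and it remains only to exclude a clique of size exactly $k$. Suppose $C=\{v_1,\dots,v_k\}$ spans a $K_k$. Each $v_i$ is adjacent to all of $C\setminus\{v_i\}$, hence has exactly one further neighbour $u_i\notin C$. Evaluating $\lambda$ on an internal edge $v_iv_j$ yields $\lambda=(k-2)+[\,u_i=u_j\,]$, where $[\,u_i=u_j\,]$ is the indicator that $u_i=u_j$; since this must be constant over all pairs, the $u_i$ are either all equal or all distinct.

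If the $u_i$ are all equal to a single vertex $w$, then $w$ is adjacent to every vertex of $C$, so $C\cup\{w\}$ induces a $K_{k+1}$ which is a connected component, forcing $G=K_{k+1}$ to be complete, contrary to assumption. If instead the $u_i$ are all distinct, then no $u_i$ is adjacent to any $v_j$ with $j\neq i$, so the edge $v_1u_1$ lies in no triangle, giving $\lambda=0$; comparing with $\lambda=k-2\geq 2$ from the internal edges is a contradiction for $k\in\{4,5\}$. Thus no $K_k$ exists and $\omega(G)\leq k-1$, which closes the two borderline cases and completes the proof. The main obstacle is precisely this clique-bound step: one must see that regularity forces a unique outside-neighbour $u_i$ for each clique vertex and that edge-transitivity collapses the local picture to only the ``all equal'' or ``all distinct'' possibilities, both of which I can then rule out. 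Everything else is a direct appeal to \Cref{t:jss07}. (One could instead push $k=5$ and $k=4$ through the connectivity results \Cref{t:etconn}, \Cref{t:ess6} and \Cref{t:cyc5}, but verifying essential $6$-connectivity or cyclic $5$-edge-connectivity appears to demand a more delicate analysis of vertex and edge cuts than the clique computation above.)
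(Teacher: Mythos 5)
Your proposal is correct and follows essentially the same route as the paper: reduce everything to \Cref{t:jss07} and then use the fact that automorphisms preserve the number of common neighbours of an edge's endpoints to show that a connected, non-complete, $k$-regular edge-transitive graph cannot contain a $K_k$. Your case analysis on the outside-neighbours $u_i$ (all equal versus all distinct) is in fact a more carefully justified version of the paper's rather terse claim that a clique-internal edge and a clique-escaping edge cannot be equivalent under an automorphism.
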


\begin{proof}
    Suppose $k \leq 3$.
    Then either $|E| \leq 2|V|-3$ or $G$ is complete.
    By \Cref{t:r,t:gr},
    it follows that $G$ is globally rigid if and only if it is complete.
    
    Suppose $k \geq 4$ and $G$ is not globally rigid.
    By \Cref{t:jss07},
    we have that $k \in \{4,5\}$ and the maximal clique size of $G$ is $k$.
    As $G$ is vertex-transitive,
    it follows that every edge has end-points in a clique of size $k$.
    No automorphism of the graph can map an edge with end-points in two distinct cliques (where the end-points share no neighbours) to an edge with both end-points in same clique (where the end-points share $k-2$ neighbours),
    which contradicts that $G$ is edge-transitive.
\end{proof}

Because of \Cref{l:evtrans},
we will only need to focus on edge-transitive graphs that are not vertex-transitive.
While these graph may not be regular,
they will have strong structural properties.

\begin{proposition}[see {\cite[Proposition 15.1]{Biggs}}]\label{p:edgeNotVertex}
    Let $G = (V,E)$ be a connected edge-transitive graph that is not vertex-transitive.
    Then $G$ is a bipartite graph with parts $V_{\delta}, V_\Delta$,
    where every vertex in $V_\delta$ has degree $\delta$ and every vertex in $V_\Delta$ has degree $\Delta$.
    Furthermore, given $U \in \{ V_\delta,V_\Delta \}$, we can map any vertex of $U$ to another vertex of $U$ by an automorphism of $G$, and any automorphism of $G$ can only map vertices in $U$ to vertices in $U$.
\end{proposition}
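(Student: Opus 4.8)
The plan is to exploit the fact that a transitive action on edges forces the vertex set to split into at most two orbits under $\Gamma := \operatorname{Aut}(G)$, and then to read off every claim in the statement from this orbit decomposition.

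First I would fix a single edge $e_0 = \{a,b\}$ and argue that every vertex lies in the $\Gamma$-orbit of $a$ or of $b$. Given an arbitrary $v \in V$, connectivity of $G$ (with $|V| \geq 2$) guarantees that $v$ is incident to some edge $f$; edge-transitivity then supplies $\pi \in \Gamma$ with $\pi(e_0) = f$, so that $\{\pi(a), \pi(b)\}$ is exactly the pair of endpoints of $f$. Since $v$ is one of these endpoints, $v \in \{\pi(a), \pi(b)\}$, and hence $v$ lies in $\operatorname{orbit}(a) \cup \operatorname{orbit}(b)$. This shows there are at most two vertex orbits; because $G$ is assumed not vertex-transitive there are at least two, hence exactly two disjoint orbits $A := \operatorname{orbit}(a)$ and $B := \operatorname{orbit}(b)$ partitioning $V$.

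Next I would establish bipartiteness. Every edge of $G$ is of the form $\pi(e_0)$ for some $\pi \in \Gamma$ by edge-transitivity; such an edge has endpoints $\pi(a) \in A$ and $\pi(b) \in B$, which lie in different (disjoint) parts. Hence $G$ is bipartite with parts $A$ and $B$. Constancy of degree within each part is then immediate: automorphisms preserve degree and act transitively on each orbit, so all vertices of $A$ share a common degree and likewise for $B$. Writing $\delta$ and $\Delta$ for the two part-degrees, taken as the minimum and maximum degree respectively, and labelling the parts accordingly as $V_\delta$ and $V_\Delta$, gives the first assertion.

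For the ``furthermore'' statement, transitivity of $\Gamma$ on each of $A$ and $B$ is precisely the claim that any vertex of a given part can be mapped to any other vertex of the same part by an automorphism. The complementary claim --- that no automorphism sends a vertex of $U$ outside $U$ --- is just the elementary fact that $\Gamma$ carries each of its own orbits onto itself, since $\pi(\Gamma \cdot v) = \Gamma \cdot v$ for every $\pi \in \Gamma$. I expect the only genuinely delicate point to be the two-orbit argument: one must be careful that every vertex really is incident to an edge (handled by connectedness together with $|V| \geq 2$) and that the two orbits are genuinely distinct and disjoint rather than coinciding, which is guaranteed exactly by the failure of vertex-transitivity. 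Everything else follows formally from the orbit structure.
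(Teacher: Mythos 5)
Your proof is correct. The paper does not give its own argument for this proposition --- it simply cites it to Biggs (Proposition 15.1) --- and your argument is the standard one behind that citation: edge-transitivity forces $V$ to be covered by the two orbits of the endpoints of a fixed edge, failure of vertex-transitivity makes these orbits distinct and hence disjoint, and every edge then joins the two orbits, yielding the bipartition, the constant degrees on each part, and the orbit-preservation claims.
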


Before beginning the proof for \Cref{t:maine},
we will need the following technical lemma.

\begin{lemma}\label{l:count}
    Let $G=(V,E)$ be a connected edge-transitive, but not vertex-transitive, graph.
    If $\delta \geq 4$, or $\delta = 3$ and $\Delta \geq 6$,
    then $|E| >2|V| - 3$.
    If $\delta \leq 3$ and $\Delta \leq 5$,
    then $|E| \leq 2|V| - 3$, or $G$ is one of the graphs featured in \Cref{fig:cbp}.
\end{lemma}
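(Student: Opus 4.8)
The plan is to exploit the bipartite structure guaranteed by \Cref{p:edgeNotVertex}. Write $G = (V_\delta \cup V_\Delta, E)$ where every vertex of $V_\delta$ has degree $\delta$ and every vertex of $V_\Delta$ has degree $\Delta$. Counting edges from each side gives the fundamental identity $|E| = \delta |V_\delta| = \Delta |V_\Delta|$, so $|V_\delta| = \Delta |V_\Delta| / \delta$ and $|V| = |V_\delta| + |V_\Delta| = (\delta + \Delta)|V_\Delta|/\delta$. The inequality $|E| > 2|V| - 3$ that we must establish in the first case, and its reverse in the second, thus both reduce to comparing $|E| = \Delta |V_\Delta|$ against $2|V| - 3 = 2(\delta+\Delta)|V_\Delta|/\delta - 3$.

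First I would handle the case $\delta \geq 4$, or $\delta = 3$ and $\Delta \geq 6$. Setting $n = |V_\Delta|$, the claim $|E| > 2|V|-3$ becomes
\begin{equation*}
\Delta n > \frac{2(\delta+\Delta)}{\delta} n - 3,
\end{equation*}
which after multiplying through by $\delta$ and rearranging is equivalent to $\delta\Delta n - 2\delta n - 2\Delta n + 3\delta > 0$, i.e. $(\delta-2)(\Delta-2)n > 4n - 3\delta$, or more cleanly $n\big((\delta - 2)(\Delta - 2) - 4\big) > -3\delta$. When $\delta \geq 4$ we have $\Delta \geq \delta \geq 4$ (since in a bipartite edge-transitive graph the two degrees satisfy $\delta \le \Delta$, and a standard argument forces $\Delta \ge \delta$), so $(\delta-2)(\Delta-2) \geq 4$ and the left side is nonnegative while the right side is strictly negative, giving the inequality. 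When $\delta = 3$ and $\Delta \geq 6$ the product $(\delta-2)(\Delta-2) = \Delta - 2 \geq 4$, so again the bracket is nonnegative and the same conclusion holds. The only delicate point here is confirming $\Delta \geq \delta$; I would justify this from \Cref{p:edgeNotVertex} together with the observation that the larger part cannot have the larger degree.

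For the second case, $\delta \leq 3$ and $\Delta \leq 5$, the plan is to enumerate the finitely many admissible degree pairs and use the sparsity inequality to bound the graph. If $\delta \leq 2$ the graph is easily seen to be sparse directly (a degree-$2$ or degree-$1$ vertex contributes too few edges), so the interesting subcase is $\delta = 3$ with $\Delta \in \{3,4,5\}$; since the graph is not vertex-transitive we must have $\delta < \Delta$, leaving $\Delta \in \{4,5\}$. For these the inequality $(\delta-2)(\Delta-2) - 4 < 0$ holds (the bracket equals $-2$ when $\Delta=4$ and $-1$ when $\Delta=5$), so $|E| \le 2|V|-3$ holds automatically once $n$ is large enough; the reverse inequality $|E| > 2|V|-3$ can occur only for finitely many small values of $n = |V_\Delta|$. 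For each such borderline $n$ I would invoke edge-transitivity as a strong regularity constraint to pin down the graph exactly, and these few exceptional graphs are precisely $K_{3,4}$, $K_{3,5}$, and $H_{6,10}$ of \Cref{fig:cbp}.

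The main obstacle will be the final identification step: ruling out all but the three listed exceptional graphs. The edge counting is routine once the degree pair is fixed, but translating the numerical constraint "$|E| > 2|V|-3$ for a specific small $n$" into a complete classification of which edge-transitive bipartite graphs realise it requires genuine combinatorial work, since edge-transitivity is a rigid symmetry condition that sharply limits the possible incidence structures. I expect that for $(\delta,\Delta) = (3,4)$ only $K_{3,4}$ survives, for $(\delta,\Delta) = (3,5)$ we get $K_{3,5}$ and $H_{6,10}$, and that verifying $H_{6,10}$ really is edge-transitive and has the stated parameters (deferred to \Cref{appendix}) is where the bulk of the care lies.
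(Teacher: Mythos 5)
Your approach is the same as the paper's: use \Cref{p:edgeNotVertex} to get the biregular bipartition, double-count to get $|E|=\delta|V_\delta|=\Delta|V_\Delta|$, and turn $|E|\gtrless 2|V|-3$ into an inequality in $\delta$, $\Delta$ and $n=|V_\Delta|$ (your $(\delta-2)(\Delta-2)$ factorisation is just a repackaging of the paper's $|E|=2|V|+\tfrac{2\Delta-12}{6}|V_\Delta|$). Your worry about $\Delta\geq\delta$ is a non-issue, since $\delta$ and $\Delta$ are by definition the minimum and maximum degrees.

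There is one genuine error: the claim that non-vertex-transitivity forces $\delta<\Delta$, which you use to discard $\Delta=3$. Regular graphs that are edge-transitive but not vertex-transitive exist (the semi-symmetric graphs, e.g.\ the cubic Gray graph), so the case $\delta=\Delta=3$ cannot be excluded this way. It must be handled by the same count: there $|E|=2|V|-|V_\Delta|\leq 2|V|-3$ since $|V_\Delta|\geq\delta=3$, so the conclusion survives, but as written your argument skips a real case for a false reason. Separately, your "final identification step" is easier than you anticipate: the relation $3|V_\delta|=\Delta|V_\Delta|$ forces $3\mid|V_\Delta|$, which collapses $n\leq 4$ to $n=3$ (giving $K_{3,4}$, automatically complete bipartite since $|V_\Delta|=\delta$) and $n\leq 8$ to $n\in\{3,6\}$ (giving $K_{3,5}$ and, via Conder's census of edge-transitive bipartite graphs, $H_{6,10}$); no further combinatorial classification work is needed.
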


\begin{proof}
    If $\delta \leq 2$ then we must have $|E| < 2|V| - 3$;
    this is as $|V_\delta| \leq |V|-\delta$ and $|E| = \delta|V_\delta|$.
    Now suppose $\delta \geq 3$.
    As $|E| = \delta|V_\delta| = \Delta|V_\Delta|$,
    we have that $|E| = \frac{\delta}{2}|V_\delta| + \frac{\Delta}{2}|V_\Delta|$.
    If $\delta \geq 4$ then 
    \begin{align*}
        |E| \geq 2|V_\delta| + 2|V_\Delta| \geq 2|V| > 2|V|-3.
    \end{align*}
    Suppose instead that $\delta = 3$.
    We now note that
    \begin{eqnarray*}
        |E| &=& \frac{\delta}{2}|V_\delta| + \frac{\Delta}{2}|V_\Delta| \\
        &=& 2|V| - \frac{1}{2}|V_\delta| + \frac{\Delta - 4}{2}|V_\Delta| \\
        &=& 2|V| - \frac{\Delta}{6}|V_\Delta| + \frac{\Delta - 4}{2}|V_\Delta| \\
        &=& 2|V| + \frac{2\Delta - 12}{6}|V_\Delta|.
    \end{eqnarray*}
    If $\Delta \geq 6$ then $|E| \geq 2|V| > 2|V|-3$,
    and if $\Delta =3$ then $|E| \leq 2|V|-3$.
    If $\Delta = 4$ then either $|E| \leq 2|V|-3$ or $|V_\Delta| \leq 4$.
    As $3|V_\delta| = 4 |V_\Delta|$,
    the latter implies $G = K_{3,4}$.
    If $\Delta = 5$ then either $|E| \leq 2|V|-3$ or $|V_\Delta| \leq 8$.
    As $3|V_\delta| = 5 |V_\Delta|$,
    the latter implies either $|V_\delta|= 5$ and $|V_\Delta| = 3$, or $|V_\delta|= 10$ and $|V_\Delta| = 6$.
    If $|V_\delta|= 5$ and $|V_\Delta| = 3$ then $G = K_{3,5}$.
    The only edge-transitive bipartite graph with $|V_\delta|= 10$, $|V_\Delta| = 6$, $\delta = 3$ and $\Delta = 5$ is the graph $H_{6,10}$ featured in \Cref{fig:cbp} (see \cite{edgetranslist}).
\end{proof}

We will require the following concept for our next result.
A graph $G=(V,E)$ is said to be a \emph{circuit} if $|E| = 2|V|-2$ and $G-e$ is tight for every edge $e \in E$.
By \Cref{t:r}, every circuit is redundantly rigid and hence 2-connected (this follows from the observation that any separating vertex will act like a ``hinge'' for the graph and contradict rigidity).
Furthermore, it follows from \Cref{t:gr} that a circuit is globally rigid if and only if it is 3-connected.
As the circuits are the minimal dependent sets of the 2-dimensional rigidity matroid (see \cite{GrSS93} for more details), the following two properties will always hold: 
(i) if a graph $G=(V,E)$ has $|E| >2|V|-3$, then $G$ contains a circuit, and
(ii) a rigid graph is redundantly rigid if and only if every edge is contained in a circuit.

\begin{lemma}\label{l:main2}
    Let $G=(V,E)$ be a connected edge-transitive, but not vertex-transitive, graph with $\delta \geq 4$, or $\delta = 3$ and $\Delta \geq 6$.
    Then $G$ is globally rigid.
\end{lemma}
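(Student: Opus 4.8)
The goal is to show that an edge-transitive, non-vertex-transitive graph $G$ with either $\delta \geq 4$, or $\delta = 3$ and $\Delta \geq 6$, is globally rigid. By \Cref{t:gr}, it suffices to prove that $G$ is 3-connected and redundantly rigid (the complete-graph case cannot occur here, since complete graphs are vertex-transitive). The plan is to obtain 3-connectivity cheaply from the structural results already available, and then to establish redundant rigidity using the edge-count from \Cref{l:count} together with the matroidal machinery recorded just before the statement.

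**Step 1: Connectivity.** First I would note that by \Cref{p:edgeNotVertex} the graph is bipartite with parts $V_\delta, V_\Delta$, and that \Cref{t:etconn} guarantees $G$ is $\delta$-connected. Since $\delta \geq 3$ in both cases of the hypothesis, $G$ is at least 3-connected, which disposes of the first half of the criterion in \Cref{t:gr}.

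**Step 2: Redundant rigidity.** This is where the real work lies. By \Cref{l:count}, the hypotheses $\delta \geq 4$ or ($\delta = 3$, $\Delta \geq 6$) force $|E| > 2|V|-3$; hence property~(i) recorded before the lemma gives that $G$ contains a $(2,3)$-circuit, and in particular $G$ is rigid (a $(2,3)$-circuit is redundantly rigid, so it contains a spanning $(2,3)$-tight subgraph of its vertex set, and more simply $|E|>2|V|-3$ together with connectivity will be leveraged). By property~(ii), to conclude redundant rigidity it is enough to show that \emph{every} edge of $G$ lies in some $(2,3)$-circuit. The key idea is to exploit edge-transitivity: since $G$ contains at least one $(2,3)$-circuit, that circuit contains at least one edge $e$, and $e$ therefore lies in a $(2,3)$-circuit. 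For an arbitrary edge $f \in E$, edge-transitivity provides an automorphism $\pi$ of $G$ with $\pi(e) = f$; applying $\pi$ to the $(2,3)$-circuit containing $e$ yields a subgraph isomorphic to it, which is again a $(2,3)$-circuit (the $(2,3)$-circuit property is preserved under graph isomorphism, as it depends only on vertex and edge counts of subgraphs), and it contains $f$. Thus every edge lies in a $(2,3)$-circuit, so $G$ is redundantly rigid.

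**Step 3: Conclusion.** Combining Steps~1 and~2, $G$ is 3-connected and redundantly rigid, so by \Cref{t:gr} it is globally rigid.

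**Main obstacle.** The only subtle point I anticipate is making sure that a single $(2,3)$-circuit suffices as the ``seed'' whose edge-transitive orbit covers all of $E$; this is clean precisely because edge-transitivity acts transitively on edges, so one circuit's worth of membership propagates to every edge in one stroke. I would double-check that the automorphism image of a $(2,3)$-circuit is again a $(2,3)$-circuit---this is immediate since isomorphisms preserve $|V|$, $|E|$, and the edge-counts of all induced subgraphs---and that rigidity of $G$ (needed to invoke property~(ii)) is genuinely in hand, which follows from $|E|>2|V|-3$ and the presence of a $(2,3)$-circuit forcing a spanning rigid structure via \Cref{t:r}. Everything else is bookkeeping.
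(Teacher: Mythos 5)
There is a genuine gap in Step 2, and it sits exactly where the paper's proof does its real work. Your argument that every edge of $G$ lies in a $(2,3)$-circuit (one circuit exists because $|E|>2|V|-3$, and edge-transitivity propagates membership to every edge) is correct and matches the opening of the paper's proof. But property~(ii) that you then invoke reads ``a \emph{rigid} graph is redundantly rigid if and only if every edge is contained in a $(2,3)$-circuit'' --- the hypothesis of rigidity is essential and you have not established it. Your claimed justification, that rigidity ``follows from $|E|>2|V|-3$ and the presence of a $(2,3)$-circuit forcing a spanning rigid structure via \Cref{t:r},'' is false: a $(2,3)$-circuit is rigid only on its own vertex set and need not span $V$, and a graph can exceed the count $2|V|-3$ globally while still failing to contain a spanning $(2,3)$-tight subgraph. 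Concretely, two copies of $K_4$ glued at a single vertex have every edge in a $(2,3)$-circuit (each $K_4$ is one) yet the graph is flexible, since the two blocks hinge about the shared vertex. Nothing in your argument rules out $G$ decomposing into rigid blocks pairwise sharing at most one vertex.

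Excluding that hinged-block scenario is the bulk of the paper's proof. It studies the maximal rigid subgraphs $G_e$ containing each edge, shows via edge-transitivity that they form a single orbit under $\operatorname{Aut}(G)$ and hence all have the same intersection pattern with the parts $V_\delta$ and $V_\Delta$ of the bipartition, observes that each vertex of $V_\delta$ lies in exactly one block (since a vertex in two edge-disjoint redundantly rigid blocks would need degree at least $6$), and then runs a counting argument with \Cref{l:rformula} showing that if there were more than one block, a maximal $(2,3)$-sparse subgraph of $G$ would have more than $2|V|-3$ edges --- a contradiction. Only after this does the paper conclude $G$ is rigid, and then your Steps 1 and 3 go through. (A small additional point: the paper first dispatches $\delta\geq 6$ via \Cref{t:etconn} and \Cref{t:lovaszyemini}, which lets it assume $\delta\in\{3,4,5\}$; this bound on $\delta$ is what powers the ``each $V_\delta$-vertex lies in one block'' step, so it is not merely a convenience.) As written, your proof establishes 3-connectivity and that every edge lies in a circuit of the rigidity matroid, but not rigidity itself, so it does not yet prove the lemma.
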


\begin{proof}
    If $\delta \geq 6$ then $G$ is globally rigid by \Cref{t:etconn,t:lovaszyemini},
    so we may assume $\delta \in \{3,4,5\}$.
    As $|E| > 2|V| - 3$ (\Cref{l:count}),
    $G$ must contain a circuit $C$.
    Choose an edge $e' \in C$ and choose for each $e \in E$ an automorphism $\phi_e :G \rightarrow G$ where $\phi_e(e') = e$.
    If we define $C_e := \phi_{e}(C)$,
    we see that every edge of $G$ lies in a circuit.
    For each edge $e \in E$,
    let $G_e$ be the maximal rigid subgraph of $G$ that contains $e$.
    As each graph $G_e$ contains every set $C_f$ for every edge $f \in E(G_e)$,
    each graph $G_e$ is redundantly rigid and contains at least four vertices.
    
    \begin{claim}
        Every graph $G_e$ is an edge-transitive bipartite graph with parts $V_{\delta,e} \subset V_\delta$ and $V_{\Delta,e} \subset V_\Delta$.
        Furthermore, for any pair $G_e,G_f$, there exists an isomorphism $\phi$ of $G$ where $\phi(G_e) = G_f$,
        $\phi(V_{\delta,e}) = V_{\delta,f}$ and $\phi(V_{\Delta,e}) = V_{\Delta,f}$.
    \end{claim}
    
    \begin{proof}
        Fix an edge $e$ and let $\phi:G \rightarrow G$ be an automorphism where $G_e$ and $\phi(G_e)$ share an edge $f$.
        The graph $G_e \cup \phi(G_e)$ is a rigid subgraph that contains an edge $f$ of $G_e$,
        since joining two rigid graphs at two points will always form a rigid graph.
        By the maximality of $G_f$ we must have $\phi(G_e) \cup G_e = G_f$,
        and thus $G_e = G_f = \phi(G_e)$.
        Now choose any two edges $e_1,e_2$ of $G_e$.
        Since $G$ is edge-transitive, there exists an automorphism $\phi':G \rightarrow G$ with $\phi'(e_1) = \phi'(e_2)$,
        hence by our previous method we have that $G_e = \phi'(G_e)$,
        i.e., each $G_e$ is edge-transitive.
        
        Choose any edges $e,f$ of $G$ and let $\phi:G \rightarrow G$ be an automorphism where $\phi(e) = f$.
        Since $\phi(G_e) \cup G_f$ is a rigid graph that contains $f$,
        we must have $\phi(G_e) \subset G_f$ by the maximality of $G_f$.
        Similarly, we have $\phi^{-1}(G_f) \subset G_e$, and hence $\phi(G_e) = G_f$.
        We now must have $\phi(V_{\delta,e}) = V_{\delta,f}$ and $\phi(V_{\Delta,e}) = V_{\Delta,f}$ by \Cref{p:edgeNotVertex}.
    \end{proof}
    
    We note that for any two edges we either have $G_e = G_f$ or $|V(G_e) \cap V(G_f)| \leq 1$.
    Hence there exists subsets $X_1,\ldots,X_t$ of $V$ and positive integers $\ell_\delta,\ell_\Delta$ so that for each $i$ we have $|X_i \cap V_\delta| = \ell_\delta$ and $|X_i \cap V_\Delta| = \ell_\Delta$,
    and the induced subgraphs $G[X_1], \ldots, G[X_t]$ are the pairwise edge-disjoint maximal rigid subgraphs of $G$.
    As every graph $G[X_i]$ is redundantly rigid, they each must have a minimal degree of at least 3.
    If a vertex $x \in V_\delta$ were to be contained in two distinct sets $X_i,X_j$,
    then $x$ must have degree at least 3 in both $G[X_i]$ and $G[X_j]$,
    which in turn implies that the degree of $x$ in $G$ would be at least 6 as $G[X_i]$ and $G[X_j]$ are edge-disjoint,
    contradicting that $\delta < 6$.
    Hence every vertex in $V_\delta$ lies in exactly one set $X_i$,
    and so $\ell_\Delta \geq \delta$ and $|V_\delta| = t\ell_\delta$.
    
    Given a vertex $x \in V_{\Delta}$,
    define $m := |\{ X_i : x \in X_i\}|$;
    the choice of vertex of $V_\Delta$ here is irrelevant, 
    since any vertex of $V_\Delta$ can be mapped to another by an automorphism of $G$.
    Since every vertex of $V_\Delta$ lies in $m$ of the sets $X_1,\ldots,X_t$,
    we note that $m|V_\Delta| = t\ell_\Delta$.
    By combining this with $|V_\delta| = t\ell_\delta$ we have that $|X_i| = \ell_\delta + \ell_\Delta = \frac{|V_\delta| + m|V_\Delta|}{t}$ for every $i$.
    
    \begin{claim}
        $G$ is rigid.
    \end{claim}
    
    \begin{proof}
        Now suppose that $G$ is not rigid.
        As $G$ is connected and each vertex in $V_\delta$ lies in exactly one of the sets $X_1, \ldots, X_t$,
        $m = 1$ implies $t = 1$ and $G$ is rigid,
        hence $m \geq 2$.
        We now note that
        \begin{eqnarray*}
            \sum_{i=1}^t 2|X_i| - 3 &=& \sum_{i=1}^t \sum_{v \in X_i \cap V_\delta} \left( 2 - \frac{3}{|X_i| } \right) + \sum_{i=1}^t \sum_{v \in X_i \cap V_\Delta}\left(  2 - \frac{3}{|X_i| }\right) \\
            &=& \sum_{v \in V_\delta}  \left(2 - \frac{3 t}{|V_\delta| + m|V_\Delta| }\right) + m \sum_{v \in V_\Delta} \left( 2 - \frac{3 t}{|V_\delta| + m|V_\Delta| }\right) \\
            &=& \left(|V_\delta| + m|V_\Delta| \right)\left( 2 - \frac{3 t}{|V_\delta| + m|V_\Delta| } \right) \\
            &=& 2|V| + 2(m-1)|V_\Delta| - 3t \\
            &=& 2|V| + 2(m-1)|V_\Delta| - 3\frac{m|V_\Delta|}{\ell_\Delta}\\
            &\geq& 2|V| + |V_\Delta| \left( 2(m-1) - 3m/\delta \right)  \qquad \qquad \text{(as $\ell_\Delta \geq \delta$)}\\
            &\geq& 2|V|  \qquad \qquad \text{(as $m \geq 2$ and $\delta \geq 3$)}.
        \end{eqnarray*}
        Let $H = (V,F)$ be a maximal sparse subgraph of $G$.
        By \Cref{l:rformula}(\ref{l:rformula1}),
        the maximal critical sets of $H$ are exactly $X_1,\ldots,X_t$.
        However it now follows from \Cref{l:rformula}(\ref{l:rformula2}) that $|F|>2|V|-3$, a contradiction.
    \end{proof}
    
    Since every edge of $G$ lies in a circuit,
    we have that $G$ is redundantly rigid.
    Hence by \Cref{t:gr,t:etconn},
    $G$ is globally rigid.
\end{proof}

\begin{proof}[Proof of \Cref{t:maine}]
    First suppose that $\delta \leq 3$, with $\Delta \leq 5$ if $\delta = 3$,
    and further suppose that $G$ is not complete or isomorphic to any of the graphs in \Cref{fig:cbp}.
    By \Cref{l:count},
    $|E| \leq 2|V|-3$,
    thus $G$ is not globally rigid by \Cref{t:r,t:gr}.
    If $\delta \geq 4$ or $\delta=3$ and $\Delta \geq 6$,
    then $G$ is globally rigid by \Cref{l:evtrans,l:main2}.
    
    Now suppose $G$ is one of the graphs given in \Cref{fig:cbp}.
    Then $G$ contains a spanning tight subgraph (and hence is rigid by \Cref{t:r});
    see \cite{pebble} for a polynomial-time algorithm that can be used to construct a spanning tight subgraph of $G$.
    Since $|E| > 2|V| -3$ and $G$ is edge-transitive,
    every edge of $G$ is contained in a circuit.
    Hence, $G$ is redundantly rigid.
    By \Cref{t:etconn},
    $G$ is 3-connected,
    thus $G$ is globally rigid by \Cref{t:gr}.
\end{proof}

\section{Distance-regular graphs}\label{sec:dt}

Let $G = (V,E)$ be a graph and $d(x,y)$ be the distance between two vertices of $G$.
For any non-negative integers $i,j$ and any (possibly not distinct) vertices $x,y$,
define the set
\begin{align*}
    D_{i,j}(x,y) := \{ z \in V : d(x,z) = i, d(y,z) = j \}.
\end{align*}
The graph $G$ is said to be \emph{distance-regular} if for any triple $i,j,k$ of non-negative integers,
there exists a non-negative integer $a_{i,j,k}$ such that for any vertices $x,y$ where $d(x,y) = k$,
we have $|D_{i,j}(x,y)| = a_{i,j,k}$.
It is immediate that every distance-regular graph is regular. 
Distance-regular graphs also have very nice connectivity properties.

\begin{theorem}[\cite{BK09}]\label{t:distconn}
   Any distance-regular graph $G$ with degree $k$ is $k$-connected.
   Furthermore, any separating set of $G$ of size $k$ is the neighbourhood of a vertex.
\end{theorem}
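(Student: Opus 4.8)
The plan is to prove the two assertions by reducing the statement to a vertex‑isoperimetric inequality for the fragments of a minimum cut. Throughout write $N(x)$ for the neighbourhood of a vertex $x$, and for a set $A \subseteq V$ let $\partial A$ denote the set of vertices outside $A$ having a neighbour in $A$. The upper bound $\kappa(G) \le k$ is immediate when $G$ is not complete: for any vertex $v$ the set $N(v)$ separates $v$ from $V \setminus N[v]$, so it is a separating set of size exactly $k$. The whole content of the theorem is therefore the lower bound $\kappa(G) \ge k$ together with the claim that the only separating sets of size $k$ are neighbourhoods, and I would obtain both at once by proving that every set $A$ with $A \neq \emptyset$ and $V \setminus (A \cup \partial A) \neq \emptyset$ satisfies $|\partial A| \ge k$, with equality forcing $A$ to be a single vertex. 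This suffices because of a minimality reduction: if $C$ is a minimum cut and $A$ a component of $G-C$, then deleting any $c \in C$ with no neighbour in $A$ would leave $A$ still separated, contradicting minimality; hence every $c \in C$ meets $A$, so $C = \partial A$ for each fragment $A$.

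First I would record the elementary consequences of $k$‑regularity. If $C$ is a separating set with $|C| < k$ and $A$ is a fragment, then each $a \in A$ has at least $k - |C| \ge 1$ of its $k$ neighbours inside $A$ (none can lie in the opposite side), so $A$ induces a subgraph of minimum degree $\ge k - |C|$ and therefore $|A| \ge k - |C| + 1$; the complementary fragment behaves the same way. Next I would bring in distance‑regularity through the intersection numbers. Counting edges between a fragment $A$ and $C$ gives the identity $e(A,C) = k|A| - 2e(A)$, and since $C$ is also an edge cut we have $e(A,C) \ge k$ once a $k$‑edge‑connectivity bound for distance‑regular graphs is in hand (a classical and comparatively easy fact). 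The obstacle at this stage is that a large edge‑boundary does not by itself force a large vertex‑boundary, because a single $c \in C$ could in principle send as many as $k$ edges into $A$.

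The heart of the argument, and the step I expect to be the main obstacle, is converting the edge count into the vertex bound $|C| \ge k$ and nailing down the equality case. Here I would exploit the rigid local structure of a distance‑regular graph: along a shortest path from $A$ to the opposite side one obtains pairs of vertices at distance $2$ whose $c_2$ common neighbours all lie in $C$, giving a first estimate $|C| \ge c_2$, and I would iterate this idea along the distance partition measured from a vertex of the smallest fragment in order to control how strongly the crossing edges can concentrate on individual cut vertices, using the monotonicity $c_1 \le c_2 \le \cdots$ and $b_0 \ge b_1 \ge \cdots$ of the intersection numbers. Taking $A$ to be a fragment of minimum size and combining $|A| \ge k - |C| + 1$ with the counting identity and these intersection‑number constraints should force $|C| \ge k$, and in the boundary case $|C| = k$ one squeezes every inequality into an equality, pinning $A$ down to a single vertex $v$ and hence $C = \partial A = N(v)$. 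Making this squeezing precise — ruling out the intermediate configurations in which the edges distribute unevenly over $C$ — is the genuinely technical core, and is exactly the part that required the detailed combinatorial analysis of \cite{BK09}.
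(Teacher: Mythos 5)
This theorem is not proved in the paper at all: it is imported verbatim from \cite{BK09}, so there is no internal proof to compare against, and your proposal must stand or fall on its own. Its framing is sound and standard: the upper bound $\kappa(G)\le k$ via $N(v)$ for non-complete $G$, the reduction showing a minimum cut $C$ satisfies $C=\partial A$ for every fragment $A$, the minimum-degree bound $|A|\ge k-|C|+1$ when $|C|<k$, the identity $e(A,C)=k|A|-2e(A)$, and the estimate $|C|\ge c_2$ obtained from a pair $a\in A$, $b\in B$ at distance $2$ through the cut are all correct. One formulation slip is worth flagging: your stated target, that $|\partial A|=k$ forces $A$ to be a single vertex for \emph{every} such $A$, is false. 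If $C=N(v)$ then the complementary fragment $B=V\setminus(\{v\}\cup C)$ also has $\partial B=C$ of size $k$ while $|B|>1$ in general (e.g.\ the $4$-cube). The correct target is that \emph{some} fragment is a singleton, which is what your final paragraph implicitly uses by taking $A$ of minimum size; the universal version you announce as "the whole content of the theorem" cannot be proved because it is not true.

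The genuine gap, however, is that the central step is never carried out: you concede explicitly that converting the sketched inequalities into $|C|\ge k$, and squeezing the equality case $|C|=k$ down to a point neighbourhood, "is exactly the part that required the detailed combinatorial analysis of \cite{BK09}." Deferring the decisive step to the very reference being proved means the proposal is an outline, not a proof. Moreover, the ingredients you assemble are demonstrably insufficient on their own: $c_2$ can be $1$ or $2$ while $k$ is large (cycles, hypercubes, many classical families), so $|C|\ge c_2$ is nowhere near $|C|\ge k$; and the counting identity together with $|A|\ge k-|C|+1$ is perfectly consistent with, say, $|C|=k-1$ and a large fragment inducing a dense near-$k$-regular subgraph, so no contradiction can be extracted from these counts alone. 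The promised "iteration along the distance partition" using $c_1\le c_2\le\cdots$ and $b_0\ge b_1\ge\cdots$ is exactly where the real work lies — in \cite{BK09} this takes a careful analysis of the distance partition from a vertex of the smallest fragment at maximal distance from the cut, plus an extended case analysis — and no mechanism for it is given here. You have correctly identified the objects and the shape of the argument, but the theorem's content is missing.
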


By \Cref{t:lovaszyemini,t:distconn},
every distance-regular graph with degree at least 6 is globally rigid.
The Bannai-Ito conjecture \cite{bannaiitoconj} -- i.e., for any $k \geq 3$, there are only finitely many distance-regular graphs with degree $k$ --  was proven recently by Bang, Dubickas, Koolen, Moulton \cite{bannaiitoproof},
hence there are only finitely many graphs that need to be checked to prove \Cref{t:maind}.
However, the full list of all distance-regular graphs with degree 4 and higher is still unknown, so checking the finite remaining special cases is unfortunately still out of reach.
For more information about the ongoing effort to determine all the distance-regular graphs with degree 4,
see \cite{BK99}.
Instead we shall turn to essential and cyclic connectivity to prove \Cref{t:maind}.

\begin{proof}[Proof \Cref{t:maind}]
    First let $k \geq 5$.
    If $k \geq 6$ then $G$ is globally rigid by \Cref{t:lovaszyemini,t:distconn}.
    Suppose instead that $k=5$.
    Choose any two subgraphs $G_1= (V_1,E_1)$ and $G_2= (V_2,E_2)$ where $G = G_1 \cup G_2$, $|V_1\setminus V_2|\geq t$ and $|V_2\setminus V_1|\geq t$ for some positive integer $t$.
    By \Cref{t:distconn},
    we have $|V_1 \cap V_2| \geq 5$ if $t =1$ (i.e., $G$ is 5-connected) and $|V_1 \cap V_2| \geq 6$ if $t \geq 2$.
    Hence $G$ is essentially 6-connected, and thus globally rigid by \Cref{t:ess6}.
    
    Now let $k = 4$.
    If $|V| \leq 7$,
    then $G$ is either the complete graph on 5 vertices or the octohedron graph;
    see \cite{BK99} for more details.
    It can be quickly checked that both graphs are redundantly rigid and 3-connected,
    and hence $G$ globally rigid by \Cref{t:gr}.
    Suppose $|V| \geq 8$,
    and choose $X \subset V$ such that $G[X]$ and $G[V\setminus X]$ both contain a cycle.
    Let $t$ be the number of edges between $X$ and $V \setminus X$.
    Since $|V| \geq 8$ and $G$ is 4-regular,
    either one of $X$ or $V \setminus X$ is a clique of size 3 and $t = 6$,
    or $|X| , |V \setminus X| \geq 4$.
    Suppose the latter holds and $t <5$.
    By \Cref{t:distconn},
    $t = 4$ and $G$ is 4-connected.
    The edges separating $X$ and $V \setminus X$ must be independent (or else $G$ would be 3-connected),
    hence there exists separating set $S = \{x_1,x_2,y_1,y_2\}$ of $G$ so that $x_1,x_2 \in X$, $y_1,y_2 \in V \setminus X$, and every vertex in $V \setminus S$ can be neighbours with at most three of the vertices in $S$.
    By \Cref{t:distconn} we must have that $|S| \geq 5$, a contradiction,
    hence $t = 5$ and $G$ is cyclically 5-edge-connected.
    Thus $G$ is globally rigid by \Cref{t:cyc5}.
\end{proof}

\section{Classifying the rigid but not globally rigid graphs}\label{sec:rig}

In this section, we will show that there are only two graphs that are edge-transitive and/or distance-regular which are rigid but not globally rigid.
These graphs are exactly the two cubic graphs featured in \Cref{fig:rigidcubic}.

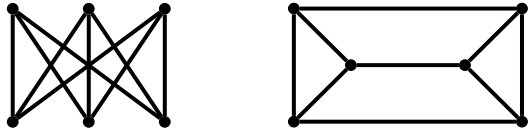
\begin{figure}[htp]
	\begin{center}
        \begin{tikzpicture}[scale=1]
			\node[vertex] (11) at (1,1) {};
			\node[vertex] (21) at (2,1) {};
			\node[vertex] (31) at (3,1) {};
			
			\node[vertex] (12) at (1,2.5) {};
			\node[vertex] (22) at (2,2.5) {};
			\node[vertex] (32) at (3,2.5) {};
			
			\draw[edge] (11)edge(12);
			\draw[edge] (11)edge(22);
			\draw[edge] (11)edge(32);
			
			\draw[edge] (21)edge(12);
			\draw[edge] (21)edge(22);
			\draw[edge] (21)edge(32);
			
			\draw[edge] (31)edge(12);
			\draw[edge] (31)edge(22);
			\draw[edge] (31)edge(32);
		\end{tikzpicture}\qquad\qquad
        \begin{tikzpicture}[scale=0.75]
			\node[vertex] (11) at (-1,0) {};
			\node[vertex] (21) at (-2,1) {};
			\node[vertex] (31) at (-2,-1) {};
			
			\node[vertex] (12) at (1,0) {};
			\node[vertex] (22) at (2,1) {};
			\node[vertex] (32) at (2,-1) {};
			
			\draw[edge] (11)edge(12);
			\draw[edge] (21)edge(22);
			\draw[edge] (31)edge(32);
			
			\draw[edge] (11)edge(21);
			\draw[edge] (21)edge(31);
			\draw[edge] (31)edge(11);
			
			\draw[edge] (12)edge(22);
			\draw[edge] (22)edge(32);
			\draw[edge] (32)edge(12);
		\end{tikzpicture}
	\end{center}
	\caption{The only two cubic graphs that are rigid but not complete;
	$K_{3,3}$ (left) and $K_2 \times K_3$ (right).
	}\label{fig:rigidcubic}
\end{figure}

In fact, the graphs featured in \Cref{fig:rigidcubic} are the only rigid but not globally rigid cubic graphs.

\begin{lemma}\label{l:cubic}
    The only globally rigid cubic graph is $K_4$,
    and the only rigid but not globally rigid cubic graphs are the two pictured in \Cref{fig:rigidcubic}.
\end{lemma}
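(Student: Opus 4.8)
The plan is to bound the number of vertices of a rigid cubic graph by a one-line edge count and then handle the few remaining graphs by hand. First I would observe that a cubic graph $G=(V,E)$ has $|E| = \tfrac{3}{2}|V|$, so in particular $|V|$ is even. By \Cref{t:r}, a rigid graph on $|V| \geq 2$ vertices contains a spanning $(2,3)$-tight subgraph, and hence satisfies $|E| \geq 2|V|-3$. Combining the two gives $\tfrac{3}{2}|V| \geq 2|V|-3$, i.e. $|V| \leq 6$. Since the smallest cubic component is $K_4$ on four vertices, any cubic graph on at most six vertices is connected, so a rigid cubic graph has exactly $4$ or $6$ vertices.

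The case $|V|=4$ yields only $K_4$, which is complete and hence globally rigid by \Cref{t:gr}. For $|V|=6$ there are exactly two cubic graphs, namely $K_{3,3}$ and $K_2 \times K_3$, the two graphs of \Cref{fig:rigidcubic}; this is a standard enumeration. Each has $|E| = 9 = 2|V|-3$, so I would verify directly that each is $(2,3)$-sparse. For $K_{3,3}$ this reduces to checking that a subset with $a$ vertices in one part and $b$ in the other has $ab \leq 2(a+b)-3$, which holds in every case; for the prism it suffices to check the two triangles and the mixed subsets. Being $(2,3)$-sparse with $|E| = 2|V|-3$ makes each graph $(2,3)$-tight, hence rigid by \Cref{t:r}.

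Finally, to see that neither $K_{3,3}$ nor $K_2\times K_3$ is globally rigid, I would use that each has exactly $2|V|-3$ edges, so deleting any edge leaves $2|V|-4 < 2|V|-3$ edges, too few for the remaining graph to be rigid. Thus neither graph is redundantly rigid, and since neither is complete, \Cref{t:gr} shows neither is globally rigid. Collecting these facts: every rigid cubic graph is one of $K_4$, $K_{3,3}$, or $K_2\times K_3$; of these only $K_4$ is globally rigid, while $K_{3,3}$ and $K_2\times K_3$ are rigid but not globally rigid. The main obstacle is the $|V|=6$ enumeration together with the accompanying $(2,3)$-sparsity verification, although both are entirely routine once the vertex bound $|V|\leq 6$ is in place.
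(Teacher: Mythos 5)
Your proposal is correct and follows essentially the same route as the paper: the edge count $|E|=\tfrac{3}{2}|V|\geq 2|V|-3$ forces $|V|\leq 6$, the $|V|=4$ case is $K_4$, the $|V|=6$ case yields exactly $K_{3,3}$ and the prism, and both are $(2,3)$-tight hence rigid but not globally rigid by \Cref{t:r,t:gr}. You simply spell out the sparsity verification and the redundant-rigidity failure in more detail than the paper does.
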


\begin{proof}
    Let $G=(V,E)$ be a cubic rigid graph that is not complete.
    By the hand-shaking lemma, we have that $|E| = \frac{3}{2}|V|$.
    If $|E| < 2|V| -3$ then $G$ is flexible (\Cref{t:r}),
    and if $|V|=4$ then $G$ is complete,
    hence $|V|=6$.
    Thus $G$ is either $K_{3,3}$ or $K_2 \times K_3$ (see \Cref{fig:rigidcubic}).
    Both $K_{3,3}$ and $K_2 \times K_3$ are tight,
    and hence are rigid but not globally rigid by \Cref{t:r,t:gr}.
\end{proof}

The class of vertex-transitive graphs that are rigid but not globally rigid was characterised by Jackson, Servatius and Servatius.

\begin{corollary}[{\cite[Corollary 2.5]{JSS07}}]\label{c:jss07}
	Let $G=(V,E)$ be a connected vertex-transitive graph of degree $k$.
	Suppose $G$ is not globally rigid.
	Then $G$ is rigid if and only if either:
	\begin{enumerate}[(i)]
	    \item $k=5$, $G$ contains a clique of size 5 and $30 \leq |V| \leq 38$,
	    \item $k=4$, $G$ contains a clique of size 4 and $12 \leq |V| \leq 15$, or
	    \item $G$ is one of the graphs featured in \Cref{fig:rigidcubic}.
	\end{enumerate}
\end{corollary}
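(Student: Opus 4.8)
The plan is to derive the corollary from the global rigidity characterisation \Cref{t:jss07} together with the combinatorial description of rigidity in \Cref{t:r}, splitting on the degree $k$. Since any complete graph, and any graph with $k \geq 6$, is globally rigid by \Cref{t:jss07}, the hypothesis that $G$ is not globally rigid forces $G$ to be non-complete with $k \leq 5$. If $k \leq 2$ then $G$, being connected, regular and non-complete, is a cycle on $|V| \geq 4$ vertices, and as $|E| = |V| < 2|V|-3$ such a graph is not rigid by \Cref{t:r}; hence these degrees contribute nothing to the list. The case $k = 3$ is exactly \Cref{l:cubic}, which yields the two graphs of \Cref{fig:rigidcubic} and so gives item~(iii). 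It remains to treat $k \in \{4,5\}$, where the failure of the globally rigid conditions of \Cref{t:jss07} means the maximal clique size equals $k$ (a $K_{k+1}$ in a $k$-regular graph would be a complete component, contradicting connectedness and non-completeness) and that $|V| \geq 12$ when $k = 4$, $|V| \geq 29$ when $k = 5$. This already supplies the clique conditions and the lower bounds on $|V|$ in items~(i) and~(ii).

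For these two degrees I would first pin down the clique structure. As $G$ is vertex-transitive and contains a $k$-clique, every vertex lies in one. A short degree count shows that two distinct $k$-cliques cannot share a vertex: any nonempty overlap forces either a vertex of degree exceeding $k$, or a $K_{k+1}$, or a configuration in which some vertices lie in two $k$-cliques while others lie in only one, contradicting vertex-transitivity. Hence $V$ partitions into $n := |V|/k$ pairwise vertex-disjoint copies of $K_k$, and each vertex contributes exactly one edge leaving its clique, so there are precisely $b := |V|/2 = kn/2$ inter-clique edges.

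The upper bounds on $|V|$ now follow from a rank count. The $n$ cliques are vertex-disjoint rigid subgraphs, so their union has rigidity rank $n(2k-3)$, and adjoining the $b$ external edges raises the rank by at most $b$; thus a maximal $(2,3)$-sparse subgraph of $G$ has at most $n(2k-3) + kn/2$ edges. By \Cref{t:r}, rigidity requires this to be at least $2|V|-3 = 2kn - 3$, which rearranges to $n(6-k) \leq 6$. For $k = 4$ this gives $|V| = 4n \leq 12$, and with $|V| \geq 12$ forces $|V| = 12$; for $k = 5$ it gives $|V| = 5n \leq 30$, and since $b = kn/2$ must be an integer the value $|V| = 5n$ is even, so the constraints $29 \leq |V| \leq 30$ force $|V| = 30$. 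In particular the stated ranges $12 \leq |V| \leq 15$ and $30 \leq |V| \leq 38$ hold with room to spare, and the only realisable values inside them are $12$ and $30$ by the same parity and divisibility observations; this settles necessity.

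For sufficiency I would note that the inequality above is met with equality exactly at $(k,n) = (4,3)$ and $(5,6)$, so $G$ is rigid precisely when all $b$ external edges are independent in the rigidity matroid, i.e.\ when the body-bar multigraph obtained by contracting each $K_k$ to a node admits the decomposition into $3$ edge-disjoint spanning trees that certifies planar body-bar rigidity. For $k=4$ vertex-transitivity forces each clique to send two bars to each of the other two, so this multigraph is the balanced $2K_3$, which has the required decomposition; the case $k=5$ is likewise governed by the symmetry of the contracted $5$-regular multigraph on $6$ nodes, and as only finitely many graphs survive the bounds it may instead be verified directly. The main obstacle is precisely this structural step: using \Cref{t:r,l:rformula} together with vertex-transitivity to rule out overlapping cliques and then to confirm that the rank bound is \emph{attained}, rather than merely bounded, for the surviving graphs, since pure edge counting alone does not distinguish rigidity from flexibility at the critical value of $|V|$.
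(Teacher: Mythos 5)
First, a point of comparison: the paper does not prove this statement at all --- it is imported verbatim as \cite[Corollary 2.5]{JSS07} --- so there is no internal proof to measure your attempt against, and it must be judged on its own. Your necessity direction is essentially correct and complete: the reduction to $k\leq 5$, the dispatch of $k\leq 2$ and $k=3$ (the latter via \Cref{l:cubic}), the pairwise disjointness of the $k$-cliques for $k\in\{4,5\}$ (a shared vertex does force degree exceeding $k$, a $K_{k+1}$, or an automorphism-invariant ``number of $k$-cliques containing a vertex'' that differs between vertices), and the rank bound $r(E)\leq n(2k-3)+kn/2$ are all sound. In fact you prove something sharper than the statement, namely that only $|V|=12$ and $|V|=30$ can actually occur; this is consistent with the quoted ranges, which are simply not tight, and it does not damage the ``only if'' direction.

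The genuine gap is in the sufficiency direction, and you have located it but not closed it. To show the surviving graphs are rigid you must show that the $kn/2$ matching edges are independent modulo the closure of the cliques, i.e.\ that the rank bound is attained. Your appeal to planar body-bar rigidity (Tay's theorem: rigid iff the contracted multigraph packs three edge-disjoint spanning trees) concerns bars with generic endpoints on abstract rigid bodies; here the bars attach at vertices of the $K_k$'s, one per vertex, and the assertion that the bar-joint rank specialises to the body-bar rank in this situation is itself a nontrivial theorem that you neither prove nor cite. Moreover, for $k=5$ you do not verify the tree-packing condition at all; it does hold --- by Tutte--Nash-Williams together with the classical fact that a connected vertex-transitive graph has edge-connectivity equal to its degree, so every union of cliques has at least $5$ outgoing edges and every partition of the six supernodes into $r$ parts is crossed by at least $5r/2\geq 3(r-1)$ quotient edges --- but no such argument appears in your write-up. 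A route that stays within the paper's toolkit would be to exhibit a spanning $(2,3)$-tight subgraph directly (delete one suitably chosen edge from each clique and keep all matching edges, then verify sparsity), or to run the count of \Cref{l:rformula} on the maximal rigid subgraphs exactly as in the proof of \Cref{l:main2}. As written, the ``if'' direction of the corollary is not established.
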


We will now give the analogues of this result for both edge-transitive graphs and distance-regular graphs.

\begin{corollary}\label{c:maine}
    Let $G=(V,E)$ be a connected edge-transitive graph.
    Suppose $G$ is not globally rigid.
    Then $G$ is rigid if and only if $G$ is one of the graphs featured in \Cref{fig:rigidcubic}.
\end{corollary}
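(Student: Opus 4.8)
The plan is to feed the classification of \Cref{t:maine} into the sparsity counts of \Cref{l:count}, pin down the only possible degree data of a rigid but not globally rigid edge-transitive graph, and then dispatch the surviving cases using the cubic classification \Cref{l:cubic} and the census of edge-transitive bipartite graphs in \cite{edgetranslist}. For the backward direction I would note that of the two graphs in \Cref{fig:rigidcubic} only $K_{3,3}$ is edge-transitive: the prism $K_2\times K_3$ has two edge orbits, namely its three matching edges (lying in no triangle) and its six triangle edges (each lying in a triangle), so it never arises under the hypotheses. Since $K_{3,3}$ is rigid but not globally rigid by \Cref{l:cubic}, this settles the ``if'' direction.

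For the forward direction, suppose $G=(V,E)$ is edge-transitive, rigid, and not globally rigid. Then $G$ is neither complete nor one of the graphs of \Cref{fig:cbp}, so \Cref{t:maine} forces either $\delta\le 2$, or $\delta=3$ and $\Delta\le 5$. Since $G$ is rigid, \Cref{t:r} gives $|E|\ge 2|V|-3$, while \Cref{l:count} gives $|E|\le 2|V|-3$; hence $|E|=2|V|-3$ and $G$ is $(2,3)$-tight. I would then split on whether $G$ is vertex-transitive. If it is, \Cref{l:evtrans} bounds its common degree by $3$, and $(2,3)$-tightness excludes degree at most $2$ (those tight graphs are complete) and forces a cubic graph on six vertices in the degree-$3$ case; by \Cref{l:cubic} together with edge-transitivity this graph is $K_{3,3}$.

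If $G$ is not vertex-transitive then \Cref{p:edgeNotVertex} makes it bipartite with parts $V_\delta,V_\Delta$. The case $\delta\le 2$ is impossible, since the opening estimate in the proof of \Cref{l:count} gives $|E|<2|V|-3$, contradicting tightness. Thus $\delta=3$, and substituting $|E|=2|V|-3$ into the identity $|E|=2|V|+\tfrac{2\Delta-12}{6}|V_\Delta|$ of \Cref{l:count} yields $(6-\Delta)|V_\Delta|=9$. Here $\Delta=3$ returns $|V_\delta|=|V_\Delta|=3$, i.e. $K_{3,3}$, which is vertex-transitive and hence already handled; $\Delta=4$ has no integer solution; and $\Delta=5$ forces $|V_\Delta|=9$ and $|V_\delta|=15$.

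The single remaining possibility --- a $(3,5)$-biregular edge-transitive bipartite graph on $15+9$ vertices --- is where I expect the real work to lie. Such a graph would be unavoidably $(2,3)$-tight: were it not $(2,3)$-sparse it would contain a $(2,3)$-circuit, and then edge-transitivity would place every edge in a circuit, forcing redundant rigidity, which is impossible when $|E|=2|V|-3$. Consequently it would be rigid and, being not redundantly rigid and not complete, not globally rigid by \Cref{t:gr}; so it would be a genuine additional example, and the crux is exactly that no such graph exists. I would settle this as the $H_{6,10}$ case was settled in \Cref{l:count}, by appealing to the classification of edge-transitive bipartite graphs in \cite{edgetranslist}, which contains no graph with these parameters. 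With this case excluded, every branch returns $G=K_{3,3}$, completing the proof.
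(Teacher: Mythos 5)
Your proof is correct and follows essentially the same route as the paper: restrict the degrees via \Cref{t:maine}, combine the rigidity lower bound $|E|\ge 2|V|-3$ with the upper bound from \Cref{l:count} to force $(2,3)$-tightness and the parameter sets $(\Delta,|V_\Delta|)\in\{(3,3),(5,9)\}$, and eliminate the $(5,9)$ case by the census \cite{edgetranslist}. The only cosmetic differences are that the paper dispatches the vertex-transitive subcase by citing \Cref{c:jss07} directly rather than re-deriving it from \Cref{l:evtrans} and \Cref{l:cubic}, and that your observation that the prism $K_2\times K_3$ is not edge-transitive (so only $K_{3,3}$ actually occurs) is a nice remark the paper does not make.
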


\begin{proof}
    Let $\delta$ and $\Delta$ be the minimum and maximum degree of $G$ respectively.
    By \Cref{t:maine},
    either $\delta \leq 2$, or $\delta \leq 3$ and $\Delta \leq 5$.
    If $G$ is vertex-transitive then it is one of the graphs featured in \Cref{fig:rigidcubic} by \Cref{c:jss07},
    so we will assume $G$ is not vertex-transitive.
    By \Cref{p:edgeNotVertex},
    $G$ is a bipartite graph with parts $V_\delta$ and $V_\Delta$.
    First suppose $\delta \leq 2$.
    The only rigid graphs with 3 or less vertices are the complete graphs,
    hence $|V| \geq 4$.
    Since $|E| = \delta|V_\delta| \leq \delta(|V| - \delta) < 2|V|-3$,
    $G$ cannot be rigid by \Cref{t:r}.
    Now suppose $\delta = 3$ and $\Delta \leq 5$.
    As shown in \Cref{l:count},
    we have $|E| = 2|V| + \frac{2\Delta - 12}{6}|V_\Delta|$.
    Since $G$ is not any of the graphs in \Cref{fig:cbp} (which are the only edge-transitive graphs with their corresponding values for $\delta,\Delta, |V_\delta|,|V_\Delta|$; see \cite{edgetranslist}),
    we have that $|E| \leq 2|V| - 3$, with equality if and only if either $\Delta = 5$, $|V_\delta| = 15$ and $|V_\Delta|=9$, or $\Delta = 3$, $|V_\delta| = 3$ and $|V_\Delta| = 3$.
    There are no edge-transitive graphs with the first set of parameters (see \cite{edgetranslist}),
    and the only graph satisfying the second set is $K_{3,3}$.
\end{proof}

\begin{corollary}\label{c:maind}
   Let $G=(V,E)$ be a connected distance-regular graph.
   Suppose $G$ is not globally rigid.
   Then $G$ is rigid if and only if $G$ is one of the graphs featured in \Cref{fig:rigidcubic}.
\end{corollary}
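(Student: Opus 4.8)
The plan is to mirror the proof of \Cref{c:maine}, exploiting the fact that the heavy lifting has already been carried out by \Cref{t:maind} and \Cref{l:cubic}. Since $G$ is distance-regular it is regular, say of degree $k$, and since $G$ is assumed not globally rigid, \Cref{t:maind} immediately forces $k \leq 3$ with $G$ not complete. The entire argument then reduces to identifying the possible cubic graphs, so the forward implication is where (almost) all the content lies; the converse will come for free.

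First I would dispose of the low-degree cases. If $k \leq 2$ then, being connected, $G$ is an edge, a path, or a cycle $C_n$; whenever $|V| \geq 4$ we have $|E| = k|V|/2 \leq |V| < 2|V|-3$, so $G$ is not rigid by \Cref{t:r}, while for $|V| \leq 3$ the only rigid connected graphs are complete. Hence a rigid, non-complete, distance-regular graph must have $k = 3$. At this point \Cref{l:cubic} applies directly: the only cubic graphs that are rigid but not globally rigid are $K_{3,3}$ and $K_2 \times K_3$, both of which are featured in \Cref{fig:rigidcubic}. This settles the ``only if'' direction, and the ``if'' direction is immediate, since \Cref{l:cubic} also records that both graphs in \Cref{fig:rigidcubic} are rigid.

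The one point worth flagging is that, unlike in \Cref{l:cubic}, we are working inside the class of distance-regular graphs, and the triangular prism $K_2 \times K_3$ is in fact \emph{not} distance-regular: two adjacent vertices lying in a common triangle have exactly one common neighbour, whereas two adjacent vertices joined by a matching edge have none, so $|D_{1,1}(x,y)|$ is not constant over the edges $xy$ and no valid parameter $a_{1,1,1}$ exists. Consequently the genuine content of the corollary is that the \emph{only} distance-regular graph which is rigid but not globally rigid is $K_{3,3}$. This sharpening is not needed for the statement as phrased, since $K_{3,3}$ is already one of the graphs in \Cref{fig:rigidcubic}, so invoking \Cref{l:cubic} together with the reduction to $k=3$ suffices. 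Thus I expect no real obstacle here: the difficulty has been absorbed into \Cref{t:maind} and \Cref{l:cubic}, and the remaining task is only to assemble these with \Cref{t:r} in the correct order.
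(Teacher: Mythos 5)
Your proof is correct and follows essentially the same route as the paper: reduce to $k\leq 3$ via \Cref{t:maind}, invoke \Cref{l:cubic} for the cubic case, and rule out $k\leq 2$ by the edge count in \Cref{t:r}. Your added observation that $K_2\times K_3$ is not distance-regular (so only $K_{3,3}$ actually arises) is accurate but, as you note, not needed for the statement as phrased.
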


\begin{proof}
    Let $k$ be the degree of $G$.
    As $G$ is not globally rigid,
    we must have that $k \leq 3$ by \Cref{t:maind}.
    The result holds for $k=3$ by \Cref{l:cubic},
    hence we need only check $k \leq 2$.
    The only $k$-regular connected graphs for $k \leq 2$ that are not complete graphs are the cycles,
    and no cycle with more than three vertices is rigid by \Cref{t:r}.
\end{proof}

\section{A conjecture for higher dimensional global rigidity}

Rigidity and global rigidity can also be considered in higher dimensional spaces;
see \cite{JaJo05} for more details.
With this in mind, the author would conjecture the following higher-dimensional analogue of \Cref{t:jss07,t:maine,t:maind}.

\begin{conjecture}\label{conj}
	There exist functions $f_1,f_2,f_3 : \mathbb{N} \rightarrow \mathbb{N}$ and $g : \mathbb{N} \times \mathbb{N} \rightarrow \mathbb{N}$ such that the following holds:
	\begin{enumerate}[(i)]
		\item Any vertex-transitive graph with degree at least $f_1(d)$ is globally rigid in $\mathbb{R}^d$.
		\item Any edge-transitive graph with minimum degree $\delta \geq f_2(d)$ and maximum degree $\Delta \geq g(\delta, d)$ is globally rigid in $\mathbb{R}^d$.
		\item Any distance-regular graph with degree at least $f_3(d)$ is globally rigid in $\mathbb{R}^d$.
	\end{enumerate}	
\end{conjecture}

It is known that a complete bipartite graph $K_{m,n}$ is globally rigid in $\mathbb{R}^d$ if and only if $m,n \geq d+1$ and $m +n \geq \binom{d+1}{2}$; see \cite{CGT21} for an explicit proof of this.
Furthermore, $K_{m,n}$ is edge-transitive and, if $m=n$, also vertex-transitive and distance-regular.
It hence follows that if the functions $f_1,f_2,f_3,g$ do exist, they satisfy the inequalities $f_1(d),f_2(d),f_3(d) \geq  d+1$ and $g(f_2(d),d) \geq \binom{d+1}{2} - f_2(d)$.

\subsection*{Acknowledgements}
The author was supported by the Austrian Science Fund (FWF): P31888.

\appendix
\section{The graph \texorpdfstring{$H_{6,10}$}{H(6,10)}}\label{appendix}

We first label the vertices of $H_{6,10}$ as the integers $\{0,\ldots,15\}$,
with $\{0,1,2,3,4,5\}$ and $\{6,7,8,9,10,11,12,13,14,15\}$ forming the parts of the bipartite graph.
With respect to this vertex labelling,
the adjacency list of $H_{6,10}$ is
\begin{gather*}
    0: \{6, 8, 9, 10, 15\}, \qquad 1: \{7, 9, 10, 12, 13\}, \qquad  2: \{6, 8, 11, 12, 13\},\\
    3: \{6, 7, 10, 11, 14\},\qquad 4: \{9, 11, 12, 14, 15\}, \qquad 5: \{7, 8, 13, 14, 15\}, \\ 
    6: \{0, 2, 3\}, \qquad 7: \{1, 3, 5\},\qquad  8: \{0, 2, 5\},\qquad  9: \{0, 1, 4\},\qquad  10: \{0, 1, 3\},\\ 11: \{2, 3, 4\}, \qquad 12: \{1, 2, 4\},\qquad  13: \{1, 2, 5\},\qquad  14: \{3, 4, 5\}, \qquad 15: \{0, 4, 5\}.
\end{gather*}
It has an automorphism group of size 60 and a diameter of 3.
It also has the property that any two distinct vertices of degree 5 share exactly two neighbours,
and any two distinct vertices of degree 3 share either one or two neighbours.
\end{document}